 \newtheorem{thm}{Theorem}[section]
 \newtheorem{cor}[thm]{Corollary}
 \newtheorem{lem}[thm]{Lemma}
 \newtheorem{prop}[thm]{Proposition}
 \theoremstyle{definition}
 \theoremstyle{remark}
 \newtheorem{rem}[thm]{Remark}
 \theoremstyle{definition}
 \newcommand{\PP}{\mathbb{P}}
\begin{document}

\title[Quadrics and linear spaces]{Subspace arrangements, configurations of linear spaces and the quadrics containing them}


\author[E. Carlini]{Enrico Carlini}
\address[E. Carlini]{Dipartimento di Matematica, Politecnico di Torino, Torino, Italia}
\email{enrico.carlini@polito.it}

\author[M.V.Catalisano]{Maria Virginia Catalisano}
\address[M.V.Catalisano]{DIPTEM - Dipartimento di Ingegneria della Produzione, Termoenergetica e Modelli
Matematici, Universit\`{a} di Genova, Piazzale Kennedy, pad. D 16129 Genoa, Italy.}
\email{catalisano@diptem.unige.it}

\author[A.V. Geramita]{Anthony V. Geramita}
\address[A.V. Geramita]{Department of Mathematics and Statistics, Queen's University, Kingston, Ontario, Canada, K7L 3N6 and Dipartimento di Matematica, Universit\`{a} di Genova, Genova, Italia}
\email{Anthony.Geramita@gmail.com \\ geramita@dima.unige.it  }


\date{}


\begin{abstract}
A subspace arrangement in a vector space is a finite collection of
vector subspaces. Similarly, a configuration of linear spaces in a
projective space is a finite collection of linear subspaces. In
this paper we study the degree 2 part of the ideal of such
objects. More precisely, for a generic configuration of linear
spaces $\Lambda$ we determine $HF(\Lambda,2)$, i.e. the Hilbert
function of $\Lambda$ in degree 2.
\end{abstract}

\maketitle

\section{Introduction}

If $V$ is an $n+1$ dimensional vector space then a {\it subspace arrangement}
is a finite collection of vector subspaces of $V$. This algebraic notion, and many properties of these objects, have been investigated from an algebraic point of view, see
\cite{Sidman04,Sidman07,BjornerPeevaSidman}. Their geometric counterparts are also of interest and are obtained by projectivizing all the vector spaces involved. Doing this one obtains, in $\PP^n=\PP(V)$, a finite collection of linear subspaces.  Such a collection is referred to as a {\em configuration of linear spaces}.

Subspace arrangements arise in many contexts and in many applications and
hence the same holds true for their alter ego, i.e. configurations of
linear spaces. Derksen \cite{Derksen} showed applications to
Statistics via Generalized Components Analysis. Also, Ma et al. gave applications to data modeling and segmentation in \cite{DerksenApplication}. Moreover, in \cite{MR2202248} configurations of linear spaces and their Hilbert
functions were shown to be related to the study of Segre-Veronese varieties and their higher
secant varieties. Another application, this time to incidence properties of
rational normal curves and linear spaces, is studied in \cite{CaCat07,CaCat09}. In this paper we exhibit yet another application, relating configurations of linear spaces to the study of a special kind of polynomial decomposition (for more on polynomial decompositions see \cite{Ca04JA} and \cite{Ca05Siena}).

As the Hilbert function of a configuration of linear spaces is of interest, we begin by recalling what is known about it. Derksen and Sidman \cite{DerksenSidman} have discovered many interesting results about the Castelnuovo-Mumford regularity (CM-regularity) of the intersection of ideals generated by generic linear forms. In our context these results give bounds on the CM-regularity for the ideal of a generic configuration of linear spaces $\Lambda$. Hence, one knows an integer $d_0$ such that for $d\geq d_0$ the Hilbert function and the Hilbert polynomial for the ideal of $\Lambda$ agree, i.e. $HF(\Lambda, d)=hp(\Lambda,d)$.
Moreover Derksen, in \cite{Derksen}, gives an explicit formula for $hp(\Lambda,d)$ when the configuration of linear spaces is generic.

Thus, for a generic configuration of linear spaces $\Lambda$, we need
only determine a finite number of values of
$HF(\Lambda,d)$ in order to have complete knowledge of the
Hilbert function of $\Lambda$. But, in general, this knowledge is available in only a few basic
situations, i.e. when $\dim\Lambda=0,1$. When $\dim\Lambda=0$ we are dealing with a generic set of points and for these the Hilbert function is known to be $HF(\Lambda,d)=\min\{{n+d \choose d},hp(\Lambda,d)\}$ for all $d$. When
$\dim\Lambda=1$ (where it is enough to consider a generic union only of lines i.e. we need not consider lines and points) the problem is considerably harder. The first
complete answer was given by Hartshorne and Hirschowitz in   \cite{HartshorneHirschowitz}.  There it is shown that if $\Lambda$ is a generic collection of lines in $\PP^n \ (n > 2)$ then
$$
HF(\Lambda, d) = \min \left \{{n+d\choose d}, hp(\Lambda,d) \ \right \} .
$$   The proof they present is long
and non-trivial and makes use of Castelnuovo's sequence (la methode d'Horace) and degeneration techniques. When $\dim\Lambda>1$ we are not aware of any general results in the
literature.

Inasmuch as the general problem of describing the Hilbert function of any generic configuration of linear spaces seems extremely difficult, one may consider some distinct subproblems.  One is to   consider a generic configuration of linear spaces (with no restriction on the dimension or on the number of components) and determine its Hilbert function in the first unknown degree, i.e. in degree 2. Another is to consider families of generic configurations of a special kind, e.g. impose bounds on the
dimension or on the number of components. The first subproblem  is
the subject of this paper.  We postpone a discussion of the second subproblem to another paper (see \cite{CarCatGer}) as it uses completely different techniques which are considerably inspired by
\cite{HartshorneHirschowitz} and involve multiple inductions
coupled with Castelnuovo's sequence.

In this paper we follow an approach which uses a fiber argument that reduces the
problem to a chain of numerical inequalities.  Using these ideas we
are able to completely determine the Hilbert function, in degree 2,
for a generic configuration of linear spaces (see Theorem
\ref{summarythm}).

The paper is organized in the following way.  Section 2 lays out some easy observations about certain Fano varieties.  Sections 3 and 4 contain the technical heart of the paper.  Section 3 deals with ``small" generic linear configurations, i.e. generic configurations for which the dimension of the components are such that no two components intersect.  A case by case argument shows that the number of quadrics containing such a generic configuration is exactly what one would expect.

Section 4 deals with ``large" generic configurations, i.e. generic configurations for which the dimensions of the components force (some) of the components to intersect.  There is again an ``expected" behavior, but it is more complicated to express.  We were able to calculate the Hilbert function in this case as well and thus show that the expression we had for the expected behavior was correct.

Finally, in Section 5, we give an application of our results to the problem of writing homogeneous polynomials in $\mathbb{C}[x_0, \ldots , x_n]$ as a sum of polynomials in fewer variables.

The first and second author wish to thank Queen's University, in
the person of the third author, for their kind hospitality during
the preparation of this work. All the authors enjoyed support
from NSERC (Canada) and GNSAGA of INDAM (Italy).  The first author
was also partially supported by a ``Giovani ricercatori, bando 2008"
grant of the Politecnico di Torino.

\section{Notation and preliminary results}

We say that  $\Lambda\subset\PP^n$ is a   {\it configuration of linear spaces} of $\Bbb P^n$ if  $\Lambda$ is a finite union of linear spaces.  We write
$$
\Lambda = \Lambda_1 + \dots + \Lambda_s \subset \Bbb P^n,
$$
where $\Lambda_i  \simeq \Bbb P^{m_i}$ is a  linear space of dimension $m_i$ ($0 \leq m_i < n$) and $\Lambda_i \neq \Lambda_j$ for $i\neq j$.  We set   $L(\Lambda) = (m_1, \dots, m_s)$ and call this the {\it weight vector} of $ \Lambda$.

Given a weight vector $L$, the configurations of linear spaces having weight vector $L$ are parameterized by
\[\mathcal{D}_L=\mathbb{G}(m_1,\PP^n) \times \ldots \times \mathbb{G}(m_i,\PP^n) \times \ldots \times \mathbb{G}(m_s,\PP^n).\]
We notice that
\[
\dim \mathcal{D}_L = \sum _{i=1} ^s (m_i+1)(n-m_i).\]

A configuration of linear spaces of $\Bbb P^n$ is {\it generic} if
its components $\Lambda_i$ are generic linear spaces of $\Bbb
P^n$, i.e. if $\Lambda = \Lambda_1 + \dots + \Lambda_s$ belongs to
a specified open not empty subset of $\mathcal{D}_L$.

Let $S=\mathbb{C}[x_0,\ldots,x_n] = \oplus_{i\geq 0}S_i$ denote the coordinate ring of
$\PP^n$. For any weight vector $L$ we consider the
following incidence correspondence
\[\Sigma_{L}=\left\lbrace(Q,\Lambda): Q\supset \Lambda\right\rbrace\subset  \PP(S_2)\times\mathcal{D}_L\]
and the projection maps
\[\phi_{L}:\Sigma_{L}\longrightarrow\PP(S_2),\psi_{L}:\Sigma_{L}\longrightarrow\mathcal{D}_L.\]
(Whenever $L$ is clear from the context we will drop the subscript $L$.)

To study the Hilbert function, in degree 2, of a generic configuration of linear spaces having weight vector $L$, we first notice that
$$
\dim (I_\Lambda)_2   =  \dim {\psi_{L}}^{-1}(\Lambda)+1
$$
where  $\Lambda\in\mathcal{D}_L$ is a generic point and
$(I_\Lambda)_2$ is the ideal of $\Lambda$ in degree 2.  So, if we know $\dim (I_\Lambda)_2$ we
can easily determine the Hilbert function of $\Lambda$ in degree 2 since $HF(\Lambda,2)={n+2\choose 2}-\dim (I_\Lambda)_2$.

\noindent{\bf Note:}\ We will often
prefer the ideal notation to the Hilbert function notation, as the
first will be easier to use in our situation.

Recall the following theorem (see  \cite[Theorem
22.13]{Harris}), that gives the dimension of the Fano variety of
$m$-planes on a smooth quadric hypersurface.

\medskip

\begin{thm}\label{Harris} The variety $F_{m,n-1}\subset \mathbb{G}(m, \PP^n)$ of $m$-planes on a smooth $(n-1)$-dimensional quadric hypersurface is smooth and if $m < \frac{n-1}{2}$ it is irreducible.

When $m \leq \frac{n-1}{2}$ it has dimension
$$
\dim F_{m,n-1} = (m+1) \left (n-1-{3m \over 2} \right ),$$
 and it is empty otherwise.
\end{thm}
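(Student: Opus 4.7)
The plan is to work through the standard incidence correspondence
\[
\Sigma = \{(\Lambda, Q) \in \mathbb{G}(m,\PP^n) \times \PP(S_2) : \Lambda \subset Q\},
\]
equipped with projections $\pi_1 : \Sigma \to \mathbb{G}(m,\PP^n)$ and $\pi_2 : \Sigma \to \PP(S_2)$. A general fiber of $\pi_2$ over the open smooth locus is exactly $F_{m,n-1}$, so once $\dim \Sigma$ is known and $\pi_2$ is shown to be dominant on the smooth locus, the dimension formula will fall out by subtraction.

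First I would compute $\dim \Sigma$ via $\pi_1$. For a fixed $\Lambda \cong \PP^m$, the fiber $\pi_1^{-1}(\Lambda)$ is the projectivization of the kernel of the restriction map $S_2 \twoheadrightarrow H^0(\Lambda,\mathcal{O}_\Lambda(2))$, which is surjective (take $\Lambda$ cut out by $x_{m+1},\dots,x_n$ and note the monomial basis maps onto the quadrics on $\PP^m$). Hence the fiber has dimension $\binom{n+2}{2} - \binom{m+2}{2} - 1$, making $\Sigma$ a projective bundle over the Grassmannian, smooth and irreducible of dimension
\[
\dim \Sigma = (m+1)(n-m) + \binom{n+2}{2} - \binom{m+2}{2} - 1.
\]
Provided one exhibits a single pair $(\Lambda_0,Q_0)$ with $Q_0$ smooth (a standard quadric of maximal Witt index containing a canonical isotropic subspace, possible exactly when $m\le (n-1)/2$), $\pi_2$ is dominant onto the smooth locus of $\PP(S_2)$; subtracting $\dim \PP(S_2)=\binom{n+2}{2}-1$ from $\dim \Sigma$ gives
\[
\dim F_{m,n-1} = (m+1)(n-m) - \binom{m+2}{2},
\]
which simplifies algebraically to $(m+1)(n-1-3m/2)$.

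Emptiness for $m > (n-1)/2$ I would deduce from the classical bound that any isotropic subspace for a non-degenerate quadratic form on $\mathbb{C}^{n+1}$ has vector dimension at most $\lfloor (n+1)/2\rfloor$, i.e.\ projective dimension at most $\lfloor (n-1)/2\rfloor$. Smoothness of $F_{m,n-1}$ everywhere would follow from homogeneity under the transitive action of $\mathrm{O}(n+1)$, transitivity itself coming from Witt's extension theorem; irreducibility for $m<(n-1)/2$ would follow from connectedness of the stabilizer of an $m$-plane, which factors as a parabolic subgroup times a smaller orthogonal group acting on the complementary form.

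The step I expect to be the main obstacle is the irreducibility claim precisely at the boundary value $m=(n-1)/2$ (only occurring when $n$ is odd): there the $m$-planes famously split into the two spinor families, the stabilizer acquires an extra connected component, and the connectedness argument above breaks down. Fortunately both spinor families have the same dimension, so the dimension formula is unaffected; but pinning down exactly when the transition occurs, and simultaneously checking that $\pi_2$ is still dominant in that boundary case, is the most delicate point.
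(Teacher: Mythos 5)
There is no internal proof to compare against: the paper states Theorem \ref{Harris} as a recalled classical result, citing \cite[Theorem 22.13]{Harris}, and uses it as a black box. Judged on its own, your incidence-correspondence argument is the standard proof and is essentially correct. The fibre of $\pi_1$ over a fixed $\Lambda\simeq\PP^m$ is $\PP\bigl((I_\Lambda)_2\bigr)$, of dimension ${n+2\choose 2}-{m+2\choose 2}-1$, so $\Sigma$ is an irreducible projective bundle of the dimension you claim; since $\Sigma$ is irreducible, the preimage of the open $\mathrm{PGL}_{n+1}$-homogeneous locus of smooth quadrics is dense as soon as it is nonempty, i.e. exactly when $m\le\frac{n-1}{2}$ (isotropic subspaces of vector dimension $m+1$ exist precisely in that range), and because all fibres over that locus are isomorphic, subtracting ${n+2\choose 2}-1$ gives $(m+1)(n-m)-{m+2\choose 2}=(m+1)\left(n-1-\frac{3m}{2}\right)$, which matches the statement. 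Emptiness for $m>\frac{n-1}{2}$ via the bound on isotropic subspaces, and smoothness via transitivity of $\mathrm{O}(n+1)$ (Witt extension), are also fine.

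The one justification that is wrong as stated is your irreducibility criterion. For a homogeneous space $\mathrm{O}(n+1)/H$ with $\mathrm{O}(n+1)$ disconnected, connectedness of the stabilizer $H$ does \emph{not} imply connectedness of the quotient; on the contrary, if $H$ were connected it would lie in $\mathrm{SO}(n+1)$ and the quotient would have two components. What you actually need is that $H$ meets the non-identity component of $\mathrm{O}(n+1)$, equivalently that $\mathrm{SO}(n+1)$ already acts transitively on $F_{m,n-1}$. This holds for $m<\frac{n-1}{2}$: writing $\CC^{n+1}$ as a hyperbolic space of dimension $2(m+1)$ containing the isotropic subspace, orthogonal to a nondegenerate complement of dimension $n-2m-1\ge 1$, the identity on the first factor times a determinant $-1$ element on the complement stabilizes the $m$-plane. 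This is exactly what fails at $m=\frac{n-1}{2}$ ($n$ odd), producing the two spinor families you correctly flag as the delicate boundary case. With that criterion corrected, your sketch fills out into a complete proof of the quoted theorem.
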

\qed

\begin{cor}\label{corHarris}  The variety of  $m$-planes on a quadric $Q \subset \Bbb P^n $ of rank $r > 2n-2m$ is empty.
\end{cor}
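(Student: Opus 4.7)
The plan is to reduce to the smooth case by exploiting the cone structure of a rank $r$ quadric. Recall that any quadric $Q \subset \PP^n$ of rank $r$ is a cone with vertex a linear space $V$ of projective dimension $n-r$ over a smooth quadric hypersurface $Q' \subset \PP^{r-1}$; in suitable coordinates the defining equation involves only $x_0, \ldots, x_{r-1}$, and $V = \{x_0 = \cdots = x_{r-1} = 0\}$.

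Let $\Pi \subset Q$ be an $m$-plane, and set $\bar{\Pi} = \Pi + V$ (the linear span). First I would check that $\bar{\Pi}$ is still contained in $Q$: if $B$ denotes the polarization of the quadratic form $f$ cutting out $Q$, then $B(v, -) \equiv 0$ for every $v \in V$, since $V$ is precisely the radical. Hence for any $v \in V$ and $p \in \Pi$ one has $f(v+p) = f(v) + 2B(v,p) + f(p) = 0$, and extending linearly gives $\bar{\Pi} \subset Q$.

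Next I would invoke the standard bijection between linear subspaces of $Q$ containing $V$ and linear subspaces of $Q'$: projecting from $V$ sends $\bar{\Pi}$ to a linear space $\Pi' \subset Q' \subset \PP^{r-1}$ of projective dimension $\dim \bar{\Pi} - (n-r+1)$. Applying Theorem \ref{Harris} to $Q'$, a smooth quadric hypersurface in $\PP^{r-1}$ (so with $n$ replaced by $r-1$), forces $\dim \Pi' \leq (r-2)/2$, hence $\dim \bar{\Pi} \leq n - r/2$. Since $m \leq \dim \bar{\Pi}$, this yields $r \leq 2n - 2m$, contradicting the hypothesis $r > 2n - 2m$.

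The only delicate point is the second step—verifying that the span of $V$ with any linear subvariety of $Q$ remains inside $Q$—but this is essentially the defining characterization of the vertex of a quadric cone and presents no real obstacle. The rest is just bookkeeping to track how projective dimensions change under the projection from $V$.
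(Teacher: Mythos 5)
Your proposal is correct and follows essentially the same route as the paper: decompose the rank $r$ quadric as a cone with vertex $V\simeq\PP^{n-r}$ over a smooth quadric in $\PP^{r-1}$, project from $V$, and apply Theorem \ref{Harris} to the base. Passing through the span $\langle \Pi, V\rangle$ rather than projecting the $m$-plane directly is only a cosmetic variation, since the resulting dimension count ($\dim\Pi' = m - \dim(\Pi\cap V) - 1$ versus $m' \geq m-(n-r)-1$) is the same.
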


\begin{proof}
For $r=n+1$ the quadric $Q$ is smooth and the conclusion follows from  Theorem \ref{Harris}.

Let $r<n+1$.  Recall  that $Q$ is the cone over a smooth quadric $\tilde
Q\subset \Bbb P^{n'} $, $n'= {r-1}$ having vertex  $V \simeq \PP^{n-r}$. Hence, each
$\Lambda \simeq \PP^m \subset Q$ is projected from the vertex $V$ into an $m'$-dimensional
linear subspace of $\tilde Q$, where
$$m' = m- \dim (\Lambda \cap V)-1 \geq  m - (n-r)-1.$$
Since
$$m' - {{n'-1} \over 2}  \geq  m - (n-r)-1 - {{(r-1)-1} \over 2} = {2m-2n+r \over 2} >0,
$$
by Theorem \ref{Harris},  there are no  $m'$-planes on $\tilde Q$, and the conclusion follows.

\end{proof}

\begin{lem}\label{lemma} Let $m_1 \geq \dots \geq m_s \geq 0$, and $m_1+m_2 <n$. Let
$$ \Lambda = \Lambda_1 + \dots + \Lambda_s \subset \Bbb P^n$$
be a generic configuration of linear spaces with weight vector
$L= (m_1, \dots, m_s)$ and let $r \leq 2n-2m_1$.

\medskip If $r$ is even (say $r = 2p$) and one has:

$${n+2\choose 2} - {n+2-r  \choose 2} \leq
\sum_{m_i\leq p-1} {m_i+2\choose 2}+ {p\over 2} \sum_{m_i\geq p} (2m_i+3-p)
, $$

or if $r$ is odd (say $r=2p+1$) and one has

$${n+2\choose 2} - {n+2 -r  \choose 2} \leq
$$
$$
\sum_{m_i\leq p-1} {m_i+2\choose 2} +{1 \over 2} \sum_{m_i\geq p}(2m_i+2+p(2m_i+1- p))
, $$

then $(I_{\Lambda})_2$ does not contain any rank $r$ quadric.
\end{lem}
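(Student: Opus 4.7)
The strategy is an incidence/fiber argument, in the spirit of the variety $\Sigma_L$ defined above but restricted to rank $r$ quadrics. Let $\mathcal{Q}_r\subset\PP(S_2)$ be the locally closed subset of quadrics of rank exactly $r$; by the standard determinantal stratification it has dimension $\binom{n+2}{2}-1-\binom{n-r+2}{2}$. Form the incidence variety $\Sigma_{L,r}:=\Sigma_L\cap(\mathcal{Q}_r\times\mathcal{D}_L)$. The conclusion of the lemma is equivalent to saying that the projection $\Sigma_{L,r}\to\mathcal{D}_L$ is not dominant; since $\mathcal{D}_L$ is irreducible, it suffices to prove
$$\dim\Sigma_{L,r}<\dim\mathcal{D}_L=\sum_{i=1}^s(m_i+1)(n-m_i).$$

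To estimate $\dim\Sigma_{L,r}$ I project to $\mathcal{Q}_r$ and study the fiber over a generic rank $r$ quadric $Q$; this fiber is $\prod_i F_{m_i,Q}$, where $F_{m,Q}$ denotes the variety of $m$-planes on $Q$. Write $Q$ as the cone with vertex $V\cong\PP^{n-r}$ over a smooth quadric $\tilde Q\subset\PP^{r-1}$, and set $p=\lfloor r/2\rfloor$. I compute $\dim F_{m,Q}$ by stratifying by $k=\dim(\Lambda\cap V)$. When $m\leq p-1$, a dimension count shows the generic $\Lambda\subset Q$ is disjoint from $V$ and projects isomorphically to an $m$-plane on $\tilde Q$; combining Theorem~\ref{Harris} applied to $\tilde Q$ with the $(m+1)(n-r+1)$-dimensional family of graph-type lifts through $V$ yields $\dim F_{m,Q}=(m+1)(n-1-\tfrac{3m}{2})$, so the codimension of $F_{m,Q}$ in $\mathbb{G}(m,\PP^n)$ equals $\binom{m+2}{2}$. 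When $m\geq p$, Theorem~\ref{Harris} applied to $\tilde Q$ forces $k\geq m-p$, and a generic $\Lambda\in F_{m,Q}$ is obtained by choosing $W\subset V$ of dimension $m-p$, a maximal $(p-1)$-plane $\tilde\Lambda$ on $\tilde Q$, and a graph-type lift of $\tilde\Lambda$ containing $W$. A direct computation then gives the codimension of $F_{m,Q}$ in $\mathbb{G}(m,\PP^n)$ as $\tfrac{p(2m+3-p)}{2}$ when $r=2p$ and as $\tfrac{(p+1)(2m+2-p)}{2}=\tfrac{2m+2+p(2m+1-p)}{2}$ when $r=2p+1$. Corollary~\ref{corHarris} and the standing hypothesis $r\leq 2n-2m_1\leq 2n-2m_i$ ensure that every $F_{m_i,Q}$ is nonempty, so the fiber dimension is exactly as computed.

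Summing these codimensions over $i$ and substituting $\dim\mathcal{Q}_r$, the strict inequality $\dim\Sigma_{L,r}<\dim\mathcal{D}_L$ is equivalent to
$$\binom{n+2}{2}-\binom{n-r+2}{2}\ \leq\ \sum_{i=1}^s\operatorname{codim}_{\mathbb{G}(m_i,\PP^n)}F_{m_i,Q};$$
the $-1$ picked up when projectivizing $S_2$ is what converts this weak inequality into the strict inequality of variety dimensions. In each of the two parity cases this displayed inequality is precisely the hypothesis of the lemma, so $\Sigma_{L,r}\to\mathcal{D}_L$ is not dominant and no rank $r$ quadric contains the generic $\Lambda$. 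The main technical obstacle will be the Fano-variety dimension count for $m\geq p$: one must carefully handle the stratification by $\dim(\Lambda\cap V)$ and count the graph-type lifts of a maximal linear space on $\tilde Q$ back to an $m$-plane on the cone $Q$. Once these codimension formulas are in hand, matching the parity-split statement of the lemma is a routine comparison.
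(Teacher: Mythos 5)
Your proposal is correct and follows essentially the same route as the paper: an incidence-variety dimension count in which one computes the fibers over a rank $r$ quadric (viewed as a cone over a smooth quadric in $\PP^{r-1}$) via Theorem \ref{Harris}, and shows the total dimension falls below $\dim\mathcal{D}_L$; your codimension formulas $\binom{m+2}{2}$, $\tfrac{p(2m+3-p)}{2}$ and $\tfrac{2m+2+p(2m+1-p)}{2}$ agree exactly with the paper's fiber computation. The only difference is presentational (codimensions in the Grassmannian rather than the paper's direct fiber dimensions), so nothing essential is missing.
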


\begin{proof}

We let $Q$ be a rank $r$ quadric in $I_\Lambda$.  We want to compute $\phi_L^{-1}(Q)$.  To do this we have to describe all the $\PP^{m_i}\subset Q$ for $i = 1, \ldots , s$. Since
 $Q$ is a cone over a smooth quadric $\tilde
Q\subset\PP^{r-1}$ having vertex a $\PP^{n-r}$, the generic
$\PP^{m_i} \subset Q$ is projected from the vertex onto an $m'_i$-dimensional
linear subspace of $\tilde Q$.

\medskip\medskip\noindent{\it Claim:}\ If $m_i \leq { {(r-1)-1}\over 2}$ then $m_i = m_i^\prime$.

\medskip\noindent{\it Proof of Claim:}\ Notice that $\tilde Q$ is smooth in a $\PP^{r-1}$.  By Theorem \ref{Harris} this implies that $m_i^\prime \leq  \frac{(r-1)-1}{2}$.  Hence, using the genericity of $\PP^{m_i}$ and the formula for the intersection of linear spaces of $\PP^n$ we get that $\PP^{m_i} \cap V$ has dimension
$$
\leq \max\left\{m_i +(n-r) - n ; 0\right\} = \max\{m_i-r; 0\}
$$
$$
\leq\max\{\frac{(r-1)-1}{2}-r; 0\} = 0
$$
Thus the projection of $\PP^{m_i}$ from $V$ maintains its dimension, i.e. $m_i^\prime = m_i$.

\medskip\medskip It follows from Theorem \ref{Harris} that the family of $m_i$-planes in $\tilde Q$ has dimension
\[
(m_i+1) \left (r-2-{3\over 2}m_i \right) .
\]

Moreover, the $\PP^{m_i} \subset Q$ projected onto the same $m_i$-plane form a
family of dimension
\[
(m_i+1)(n-r+1).
\]
(To see why this is so, note that these $m_i$-dimensional linear spaces all lie inside the linear space spanned by $V$ and $\PP^{m_i^\prime}$. That span has dimension $n-r + m_i^\prime + 1 = n-r+m_i+1$ and since $\dim\mathbb{G}(m_i, \PP^{n-r+m_i+1}) = (m_i+1)(n-r+1)$ the statement follows.)

\medskip For $m_i  > {r-2 \over 2} $,  by Theorem \ref{Harris},
$\tilde Q$ does not contain any $\PP^{m_i}$.
Hence $m'_i$ is  the
biggest natural number $m'$ such that there exists a $\PP^{m'}\subset\tilde Q$.
Clearly we have
$$
m' = \left \lfloor {r-2 \over 2} \right \rfloor .
$$
In this case the $\PP^{m'}\subset\tilde Q$ form a family of dimension
\[(m'+1) \left (r-2-{3\over 2}m'\right )\]
and for each of these $\PP^{m'}$, the family of $\PP^{m_i} \subset Q$
projecting onto it has dimension
\[(m_i+1)(n-r+m'-m_i+1).\]
Thus we get
$$
\dim \phi_L^{-1}(Q)=
\sum_{m_i \leq {r-2 \over 2} }
\left( (m_i+1) \left (r-2-{3\over 2}m_i \right)
+(m_i+1)(n-r+1) \right)
$$
$$
+
\sum_{m_i > {r-2 \over 2} }
((m'+1) \left(r-2-{3\over 2} m' \right)+(m_i+1)(n-r+m'-m_i+1))
$$
$$
=
\sum_{m_i \leq {r-2 \over 2} }
(m_i+1) \left (n-{3\over 2} m_i-1 \right )
$$
$$
+\sum_{m_i > \frac{r-2}{2}}
\left((m'+1) \left (r-1-{3\over 2} m' +m_i \right)+(m_i+1)(n-r-m_i) \right).
$$

Denoting by $\mathcal{Q}_r$ the family of rank $r$ quadrics we get
\[
\dim \phi_L^{-1}(\mathcal{Q}_r)={n+2\choose 2}-{n-r+2\choose 2}-1+\dim \phi_L^{-1}(Q) .
\]

It follows that
\[
\dim \phi_L^{-1}(\mathcal{Q}_r)-\dim\mathcal{D}_L
\]

\[
=\left[{n+2\choose 2}-{n-r+2\choose 2}-1+\dim \phi_L^{-1}(Q)\right]- \sum^s _{i=1} (m_i+1)(n-m_i).
\]
If $r$ is even (say $r=2p$ and hence $m' = p -1$) we obtain
$$
\dim \phi_L^{-1}(\mathcal{Q}_r)-\dim\mathcal{D}_L
$$

$$ =
{n+2\choose 2}-{n-2p+2\choose 2}-1+
\sum_{m_i \leq {p-1} }
(m_i+1) \left (n-{3\over 2} m_i-1 \right )
$$
$$ +
\sum_{m_i \geq {p} }
\left(p \left(2p+m_i- {3\over 2} p + {1\over 2}  \right) + (m_i +1 ) ( n-2p-m_i)
 \right)
 $$
 $$ - \sum^s _{i=1} (m_i+1)(n-m_i)
$$
$$ =
{n+2\choose 2}-{n-2p+2\choose 2}-1
-
\sum_{m_i\leq p-1} {m_i
+2\choose 2}- {p\over 2} \sum_{m_i\geq p} (2m_i+3-p).
$$

\medskip On the other hand, for $r$  odd (say $r = 2p + 1$ and so  $m' = p -1$) we obtain
$$
\dim \phi_L^{-1}(\mathcal{Q}_r)-\dim\mathcal{D}_L
$$

$$ =
{n+2\choose 2}-{n-(2p+1)+2\choose 2}-1+
\sum_{m_i \leq {p-1} }
(m_i+1) \left (n-{3\over 2} m_i-1 \right )
$$
$$ +
\sum_{m_i \geq {p} }
\left(p \left(2p+1+m_i- {3\over 2} p + {1\over 2}  \right) + (m_i +1 ) ( n-2p-1-m_i)
 \right)
 $$
 $$ - \sum^s _{i=1} (m_i+1)(n-m_i)
$$

$$ =
{n+2\choose 2}-{n-2p+1\choose 2}-1$$
$$
-\sum_{m_i\leq p-1} {m_i+2\choose 2} - {1 \over 2} \sum_{m_i\geq p}(2m_i+2+p(2m_i+1- p)).$$

Hence
\[\dim \phi_L^{-1}(\mathcal{Q}_r)-\dim\mathcal{D}_L<0,\]
and the conclusion follows.

\end{proof}

\section{Disjoint spaces}

We begin by considering configurations of linear spaces with non
intersecting components, i.e.
$$
\Lambda = \Lambda_1 + \cdots + \Lambda_s, \hbox{ and } \Lambda_i \cap \Lambda_j = \emptyset \hbox{ for } i \neq j.
$$
In this situation it is easy to guess the expected behavior: if too many components are
involved, then no quadric is expected to contain the configuration. We now give a precise statement and proof of this fact.

\begin{prop} \label{prop1}
Let $s \geq 2$, $m_1 \geq \dots \geq m_s \geq 0$, and $m_1+m_2 <n$ (hence $\Lambda_i \cap \Lambda_j = \emptyset$ for $i \neq j$).

Let
$$ \Lambda = \Lambda_1 + \dots + \Lambda_s \subset \PP^n$$
be a generic configuration of linear spaces having weight vector
$(m_1, \dots, m_s)$.

If
$$\sum _{i=1}^s {m_i+2 \choose 2}  \geq {n+2 \choose 2},
$$
then $\dim (I_{\Lambda})_2 =0$.

\end{prop}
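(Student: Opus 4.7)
The plan is to show that no quadric of any rank belongs to $(I_\Lambda)_2$, whence $\dim(I_\Lambda)_2=0$. By Corollary~\ref{corHarris} applied to $\Lambda_1$, a quadric of rank $r>2n-2m_1$ cannot contain $\Lambda_1$ (and therefore cannot contain $\Lambda$), so only ranks $r\in\{1,\dots,2n-2m_1\}$ need to be ruled out. For each such $r$ I will verify the numerical hypothesis of Lemma~\ref{lemma}, which then forbids rank-$r$ quadrics from $(I_\Lambda)_2$.

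Writing $r=2p$ or $r=2p+1$ according to parity, a direct expansion gives the identity
$$
\binom{m+2}{2}-\tfrac{p}{2}(2m+3-p)=\binom{m-p+2}{2}
$$
(with a parallel identity in the odd case involving $\binom{m-p+1}{2}$), which recasts the even-$r$ inequality of Lemma~\ref{lemma} as
$$
\Bigl(\sum_{i=1}^s\binom{m_i+2}{2}-\binom{n+2}{2}\Bigr)\;\geq\;\sum_{m_i\geq p}\binom{m_i-p+2}{2}-\binom{n-2p+2}{2}.
$$
The left-hand side is $\geq 0$ by the hypothesis of the proposition; when the right-hand side is $\leq 0$ there is nothing to do.

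The main obstacle is the case when the right-hand side is positive: here the ``shifted'' weight vector $(m_i-p)_{m_i\geq p}$ in $\PP^{n-2p}$ would itself overfill the degree-two space. The structural constraints transfer to the shifted configuration: $m_1+m_2<n$ gives $(m_1-p)+(m_2-p)<n-2p$, and the admissibility bound $p\leq n-m_1$ gives $m_i-p\leq n-2p$. I expect to verify the inequality by a convexity-based term-by-term comparison, using that $\binom{\cdot+2}{2}$ is convex and that the constraint $m_1+m_2<n$ forces many ``small'' $m_i$ whenever $\sum\binom{m_i+2}{2}$ is large. As warm-ups: the case $r=1$ reduces to $\sum(m_i+1)\geq n+1$, immediate from $\binom{m+2}{2}\leq\tfrac{n+1}{2}(m+1)$ (valid since $m\leq n-1$); and $r=2$ reduces to $\sum(m_i+1)\geq 2n+1$, obtained by maximizing $\sum\binom{m_i+2}{2}$ subject to $\sum(m_i+1)\leq 2n$ and $m_1+m_2<n$ and observing the maximum stays strictly below $\binom{n+2}{2}$. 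Once every admissible $r$ is handled, Lemma~\ref{lemma} delivers $(I_\Lambda)_2=0$.
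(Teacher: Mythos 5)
Your overall strategy is the paper's: rule out rank $r>2n-2m_1$ via Corollary~\ref{corHarris} and then verify, for every admissible $r$, the numerical hypothesis of Lemma~\ref{lemma}. Your identity $\binom{m+2}{2}-\tfrac{p}{2}(2m+3-p)=\binom{m-p+2}{2}$ (and its odd-rank analogue with $\binom{m-p+1}{2}$) is correct, and recasting the lemma's inequality as ``original excess $\geq$ shifted excess'' is a genuinely tidy reformulation. But the proof has a real gap exactly where the proposition is hard: in the case you yourself flag, when the right-hand side is positive, you only say you \emph{expect} a ``convexity-based term-by-term comparison'' to work, and you verify nothing beyond the warm-ups $r=1,2$. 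That unproved inequality, for all $p$ up to $\min\{n-m_1,\lfloor(n+1)/2\rfloor\}$ (even case) and the corresponding odd range, is the technical heart of the statement; the paper spends four separate cases on it, each with a descending induction on $p$ (base case at the top of the range, where the hypothesis $\sum\binom{m_i+2}{2}\geq\binom{n+2}{2}$ applies directly), a sub-claim showing that in the problematic regime at least four components satisfy $m_i\geq p$, and slope-comparison estimates such as $(2m_i+3-p)(m_1-p+1)\geq(m_i-p+1)(2m_1+3-p)$.

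Moreover, the heuristic you give for why convexity should suffice is off target: the hard regime is not one where $m_1+m_2<n$ ``forces many small $m_i$,'' but rather one where $p$ is small compared to the $m_i$ and \emph{many} components have $m_i\geq p$, so the shifted configuration in $\PP^{n-2p}$ still overfills; controlling that case requires the quantitative interplay between $\sum_{m_i\geq p}(m_i-p+1)$ and $2n-4p+1$ that drives the paper's induction. A convexity/extreme-point argument may well be made to work, but as written it is a plan, not a proof: you would need to identify the extremal configurations under the constraints $m_1+m_2<n$, $m_i\leq n-p$, $\sum\binom{m_i+2}{2}\geq\binom{n+2}{2}$ and check the inequality there for every admissible rank, including both parities. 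Until that verification is carried out, the conclusion $\dim(I_\Lambda)_2=0$ does not follow.
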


\begin{proof}\ We will show that there are no quadrics of rank $r$ in $(I_{\Lambda})_2$ for $1 \leq r \leq n+1$ and that will certainly prove the proposition.

First observe that for $r>2n-2m_1$, the conclusion follows immediately from Corollary \ref{corHarris}.  So, we are reduced to considering the case where $r \leq 2n-2m_1$.

For $r$ in this range and $r$ even (say $r=2p$) set
\begin{equation} \label{definf(p)pari}
f(p) = \sum_{m_i\leq p-1} {m_i +2\choose 2}+ {p\over 2} \sum_{m_i\geq p} (2m_i+3-p).
\end{equation}

For $r$ in this range and $r$ odd (say $r=2p+1$) set
\begin{equation} \label{definf(p)disp}
f(p)= \sum_{m_i\leq p-1} {m_i+2\choose 2} + {1 \over 2} \sum_{m_i\geq p}(2m_i+2+p(2m_i+1- p)).
\end{equation}

Notice that these expressions for $f(p)$ are precisely the expressions that appeared in the inequalities of Lemma \ref{lemma}.  Exactly for that reason, if we can show that for $1 \leq r \leq \min \{ 2n-2m_1; n+1\}$ we have

\begin{equation} \label{f(p)diseg}
 {n+2\choose 2}-{n+2-r \choose 2} \leq f(p),
\end{equation}
the conclusion will follow immediately from Lemma \ref{lemma}.

First notice that an easy computation gives us that
\begin{equation} \label {induzionexppari}
f(p+1)-f(p) =  \sum_{m_i\geq p} (m_i-p+1)  \ \ \ \ \ \ {\rm when } \ r=2p ,
\end{equation}

and

\begin{equation} \label {induzionexpdispari}
f(p+1)-f(p) =  \sum_{m_i\geq p} (m_i-p)  \ \ \ \ \ \ {\rm when } \ r=2p+1.
\end{equation}

\noindent{\it Case 1}: $r=2p$ and  $n \leq 2m_1$.

\medskip

In this case $2n-2m_1 <n+1$ and hence $\min\{2n-2m_1; n+1\} = 2n-2m_1$. So, it will be enough to  prove (\ref{f(p)diseg}) for $1 \leq r \leq 2n-2m_1$, that is for $1 \leq p \leq n-m_1$.

Notice that we have
$m_1 -p \geq m_1 - n+m_1 \geq 0$, and hence $m_1 \geq p$.

\medskip

We proceed by induction on $n-m_1-p$.

If $n-m_1-p = 0$ then $p = n-m_1$.  Since $m_1 \geq p$ we have $m_2 \leq n-m_1-1 =p-1$.

Hence,
recalling  that $\sum _{i=1}^s {m_i+2 \choose 2}  \geq {n+2 \choose 2},
$ we have
$$
f(n-m_1) - {n+2\choose 2}+{n+2-2p \choose 2}
$$
$$=
 \sum_{i\geq 2} {m_i +2\choose 2}+  \frac{n-m_1}{2}  (2m_1+3-n+m_1)
$$
$$
  - {n+2\choose 2}+{2m_1-n+2 \choose 2}
$$
$$
\geq  -{m_1 +2\choose 2}+ {{n-m_1}\over 2}  (3m_1+3-n)
 +{2m_1-n+2 \choose 2} =0.
$$
This finishes the case $n-m_1-p = 0$.

\medskip

Now suppose that $n-m_1-p >0$, i.e.  $p < n-m_1$. By (\ref{induzionexppari}) we have
$$
f(p)-{n+2\choose 2} + {n+2-r\choose 2}  =
$$
$$
 f(p+1)-\sum_{m_i\geq p}(m_i - p + 1) - {n+2\choose 2} + {n+2-2p\choose 2}   \eqno{(*)}
$$

By induction
$$
f(p+1)\geq {n+2\choose 2} - {n+2-2(p+1) \choose 2}  .
$$

Hence, induction give us that
$$
(*) \geq  {n+2\choose 2}- {n-2p\choose 2} - \sum_{m_i\geq p} (m_i-p+1)  - {n+2\choose 2}+{n+2-2p \choose 2}
$$
$$
= (2n-4p+1)- \sum_{m_i\geq p} (m_i-p+1)  .
$$

We have thus proved
$$
f(p)-{n+2\choose 2} + {n+2-r\choose 2} \geq (2n-4p+1) - \sum_{m_i\geq p} (m_i-p+1) .
$$
Clearly if $\sum_{m_i\geq p} (m_i-p+1) \leq 2n-4p+1$, we get
$$
f(p)-{n+2\choose 2} + {n+2-r\choose 2}  \geq 0,
$$
and we are done.

It remains to see when this does not happen.  That is the content of the following Claim.

\medskip\noindent{\it Claim:}\ Continuing with the hypothesis of Case 1 as well as the induction hypothesis that $n-m_1-p > 0$, if
$$
(2n-4p+1) - \sum_{m_i\geq p} (m_i-p+1) <0
$$
then $m_4 \geq p$.

\noindent{\it Note}: Once this claim is proved we need only show that (\ref{f(p)diseg})
holds if $m_4 \geq p$.

\medskip\noindent{\it Proof of Claim:}\ Since $m_1 \geq p$ we first consider the possibility that $m_2 < p$.  In this case,
$$
(2n-4p+1) - \sum_{m_i\geq p} (m_i-p+1) = (2n-4p+1) - (m_1-p+1) = 2n-m_1-3p .
$$
Since $p \leq n-m_1-1$ we have
$$
2n-m_1-3p \geq 3-n+2m_1 > 0
$$
which is a contradiction.

Now suppose $m_1\geq m_2\geq p$ and $m_3<p$.  In like manner we get that
$$
(2n-4p+1) - \sum_{m_i\geq p} (m_i-p+1) = 2n-2p-m_1-m_2-1 .
$$
As above
$$
2n-2p-m_1-m_2-1 \geq m_1-m_2 + 1\geq 1 > 0
$$
again giving a contradiction.

Finally, suppose that $m_1\geq m_2\geq m_3\geq p$ and $m_4 < p$.  Then
$$
(2n-4p+1) - \sum_{m_i\geq p} (m_i-p+1) = 2n - p - 2 - m_1 - m_2 - m_3 .
$$
As before, with $p \leq n-m_1 -1$ and, in addition, using  $n \geq m_1 + m_2 + 1$, we get that
$$
(2n-4p+1) - \sum_{m_i\geq p} (m_i-p+1)\geq m_1-m_3 \geq 0
$$
which is again a contradiction.

Thus, $m_4 \geq p$ as we wanted to show.

\medskip\medskip  We  return to the induction proof.  However, now we can also assume that
\begin{equation} \label{diseg1}
  2n-4p+1 - \sum_{m_i\geq p} (m_i-p+1) < 0
\end{equation}
and hence that $m_4 \geq p$.

It follows from (\ref{definf(p)pari}) and  (\ref{diseg1}) that
$$f(p) - {n+2\choose 2}+{n+2-2p \choose 2}$$
$$=
 \sum_{m_i\leq p-1} {m_i +2\choose 2}+ {p\over 2} \sum_{m_i\geq p} (2m_i+3-p)
 +p(2p-2n-3)
$$
$$
\geq
{p\over 2} \sum_{m_i\geq p} (2m_i+3-p)+p(2p-2n-3)
$$
$$={p\over 2} \sum_{m_i\geq p} (2 (m_i-p+1)+(p+1))+p(2p-2n-3)
$$
$$\geq {p}(2n-4p+2)+{p\over 2}\cdot 4(p+1)+p(2p-2n-3) =p \geq 0.
$$

\medskip

\noindent{\it Case 2}: $r=2p$ and  $n \geq 2m_1+1$.  (This is very similar to Case 1 and so we will omit many details.)

In this case $2n-2m_1 \geq n+1$, so we  will prove (\ref{f(p)diseg}) for $1 \leq r \leq n+1$, that is
for $1 \leq p \leq \left \lfloor  {n+1 \over 2 } \right  \rfloor$.

 We proceed by induction on $\left \lfloor  {n+1 \over 2 } \right  \rfloor -p$.

 If $p = \left \lfloor  {n+1 \over 2 } \right  \rfloor$,  since $n \geq 2m_1+1$, we have $m_1 \leq p-1$, hence  $f \left( \left \lfloor  {n+1 \over 2 } \right  \rfloor \right) =
 \sum _{i=1}^s {m_i+2 \choose 2}
 $ and since
 $\sum _{i=1}^s {m_i+2 \choose 2}  \geq {n+2 \choose 2}
$, then  (\ref{f(p)diseg}) holds for $p= \left \lfloor  {n+1 \over 2 } \right  \rfloor $.

Now assume $p < \left \lfloor  {n+1 \over 2 } \right  \rfloor$.
As in the previous case,  if
$$
\sum_{m_i\geq p} (m_i-p+1) \leq 2n-4p+1,
$$
then (\ref{f(p)diseg}) holds,
hence let
\begin {equation} \label {diseg2}\sum_{m_i\geq p} (m_i-p+1) > 2n-4p+1,
\end{equation}
and note that, since $p < \left \lfloor  {n+1 \over 2 } \right  \rfloor$, then  $2n-4p+1 \geq 0$, so at least
 $m_1 \geq p$.
If $m_i \geq p$ we  have
\begin {equation} \label {diseg3}
 (2m_i+3-p) \geq
 (m_i-p+1) \cdot  \frac{2m_1+3-p}{m_1-p+1},
\end{equation}
in fact
$$ (2m_i+3-p)(m_1-p+1) - (m_i-p+1)(2m_1+3-p)
$$
$$= (m_1-m_i)(p+1) \geq 0.
$$
Thus by (\ref {diseg2}) and (\ref {diseg3})  we have
$$f(p) =
\sum_{m_i\leq p-1} {m_i +2\choose 2}+ {p\over 2} \sum_{m_i\geq p} (2m_i+3-p)
$$
$$
\geq  {p\over 2} \sum_{m_i\geq p} (m_i-p+1) \cdot  {2m_1+3-p \over m_1-p+1}
$$
$$
\geq  {p\over 2} (2n-4p+2) \cdot  {2m_1+3-p \over m_1-p+1}.
$$
Since   ${n+2\choose 2}-{n+2-2p \choose 2} = p(2n+3-2p) $
if we prove that
\begin {equation} \label {diseg4} {p} (n-2p+1) \cdot  {2m_1+3-p \over m_1-p+1} \geq p(2n+3-2p),
\end{equation}
then (\ref{f(p)diseg}) holds.  By easy computation we have that  (\ref {diseg4}) holds if and only if
$$p(n-2m_1-1)-p+n-m_1 \geq 0,
$$
so, since $p \leq n-m_1$ and $n \geq 2m_1+1$, the conclusion follows.

\medskip

{\it Case 3}: $r=2p+1$ and  $n \leq 2m_1+1$.

In this case $2n-2m_1 \leq n+1$, thus we will prove (\ref{f(p)diseg}) for $1 \leq r \leq 2n-2m_1$, that is
for $0 \leq p \leq n-m_1-1$.

We have
$m_1 -p \geq m_1 - n+m_1+1 \geq 0,
$
hence $m_1 \geq p$.

 We will work by induction on $n-m_1-1-p$.

 Let  $p = n-m_1-1$. Since $n\geq m_1+m_2+1$, we have $m_2 \leq p$. Let $\alpha$ denote the number of $m_i$ equal to $ p$.

We have  (recalling the definition  (\ref{definf(p)disp}) and that  $\sum _{i=1}^s {m_i+2 \choose 2}  \geq {n+2 \choose 2}$)
$$ f(p) - {n+2\choose 2}+{n+1-2p \choose 2}$$
$$ \geq -  \sum_{m_i\geq p} {m_i+2\choose 2} + {1 \over 2} \sum_{m_i\geq p}(2m_i+2+p(2m_i+1- p))
  +{n+1-2p \choose 2}$$
  $$= -{m_1+2\choose 2} - \alpha {p+2\choose 2} + m_1 +1 +{p\over2}  (2m_1+1- p)+
  \alpha ( p+1+{p\over2} (2p+1-p))
  $$
  $$+{n+1-2p \choose 2}
   =  m_1 -p +1.
  $$
Hence, since $p =n-m_1-1$,  and $n \leq 2m_1+1$, we get  $m_1 -p +1 \geq 1$, and
(\ref{f(p)diseg}) holds for $p= n-m_1-1.$

Now let $p \leq n-m_1-2$.

By (\ref{induzionexpdispari}) and by the inductive hypothesis we have
$$f(p) - {n+2\choose 2}+{n+1-2p \choose 2}$$
$$=f(p+1) - \sum_{m_i\geq p} (m_i-p)  - {n+2\choose 2}+{n+1-2p \choose 2}
$$
$$\geq  {n+2\choose 2}- {n-2p-1\choose 2} - \sum_{m_i\geq p} (m_i-p)  - {n+2\choose 2}+{n+1-2p \choose 2}
$$
$$= 2n-4p-1- \sum_{m_i\geq p} (m_i-p)  .
$$
Hence for $\sum_{m_i\geq p} (m_i-p) \leq 2n-4p-1$, we obtain the conclusion.

Let
\begin{equation} \label{diseg1disp}
\sum_{m_i\geq p} (m_i-p) \geq 2n-4p,
\end{equation}
and let $\alpha$ denote  the number of $m_i$ equal or bigger than  $ p$.
Recalling that $m_1+m_2+1 \leq n \leq 2m_1+1$ and $p \leq n-m_1-2$,  it is easy to show that
$\alpha \geq 5.$
Hence
 by (\ref{definf(p)disp}),  (\ref{diseg1disp}) and by eliminating the $m_i <p$, we have
$$f(p) - {n+2\choose 2}+{n+1-2p \choose 2}$$
$$
\geq   {1 \over 2} \sum_{m_i\geq p}(2m_i+2+p(2m_i+1- p)) + 2p^2 -2np-p-n-1
$$
$$
=
 \sum_{m_i\geq p}((m_i-p)(p+1)+ {1 \over 2} (p^2+3p+2)) + 2p^2 -2np-p-n-1
$$
$$
\geq (2n-4p)(p+1)+ {\alpha   \over 2} (p^2+3p+2)+ 2p^2 -2np-p-n-1
$$
$$
\geq n-2p^2-5p-1+ {5  \over 2} (p^2+3p+2) >0.
$$
\medskip

{\it Case 4}: $r=2p+1$ and  $n \geq 2m_1+2$.

In this case $2n-2m_1 \geq n+1$, so we  will prove (\ref{f(p)diseg}) for $1 \leq r \leq n+1$, that is
for $0 \leq p \leq \left \lfloor  {n \over 2 } \right  \rfloor$.
As in the previous cases,   we will work by induction. For $p =  \left \lfloor  {n \over 2 } \right  \rfloor $,
  since $n \geq 2m_1+2$, we have $m_1 \leq p-1$, hence  $f \left( \left \lfloor  {n \over 2 } \right  \rfloor \right) =
 \sum _{i=1}^s {m_i+2 \choose 2}
 $ and since
 $\sum _{i=1}^s {m_i+2 \choose 2}  \geq {n+2 \choose 2}
$, then  (\ref{f(p)diseg}) holds for $p= \left \lfloor  {n \over 2 } \right  \rfloor $.

Let   $p < \left \lfloor  {n \over 2 } \right  \rfloor$.

As in Case 3,  if $\sum_{m_i\geq p} (m_i-p) \leq 2n-4p-1$,  then (\ref{f(p)diseg}) holds.
Let
\begin {equation} \label {diseg2disp}\sum_{m_i\geq p} (m_i-p) \geq 2n-4p,
\end{equation}
and note that, since $p \leq \left \lfloor  {n \over 2 }  \right  \rfloor -1$, then  $2n-4p > 0$, so at least
 $m_1 > p$.
For  $m_i \geq p$ we  have
\begin {equation} \label {diseg3disp}
 2m_i+2+p(2m_i+1- p) \geq
 (m_i-p) \cdot  {2m_1+2+p(2m_1+1-p) \over m_1-p
 },
\end{equation}
in fact
$$
(2m_i+2+p(2m_i+1- p))( m_1-p) -  (m_i-p)  ( 2m_1+2+p(2m_1+1-p) )$$
$$= (m_1-m_i)(p^2+3p+2) \geq 0.
$$
Omitting the $m_i <p$, by (\ref {diseg2disp}) and (\ref {diseg3disp})  we obtain
$$f(p) \geq
 {1 \over 2} \sum_{m_i\geq p}(2m_i+2+p(2m_i+1- p))
$$
$$ \geq  {1 \over 2}  \sum_{m_i\geq p} (m_i-p) \cdot  {2m_1+2+p(2m_1+1-p) \over m_1-p },
$$
$$ \geq  {1 \over 2} (2n-4p) {2m_1+2+p(2m_1+1-p) \over m_1-p }.
$$
Since   ${n+2\choose 2}-{n+1-2p \choose 2} = 2np-2p^2+n+p+1$
if we prove that
$$
 (n-2p) {2m_1+2+p(2m_1+1-p) \over m_1-p } \geq 2np-2p^2+n+p+1,
$$
we obtain the conclusion. Now, since $n \geq 2m_1+2$ and $m_1 \geq p$, we easily get
$$
 (n-2p) (2m_1+2+p(2m_1+1-p))- ( m_1-p )( 2np-2p^2+n+p+1)
 $$
 $$=
n(m_1+p^2+2p+2)-5m_1p-3p-2m_1p^2-p^2-m_1
$$
$$\geq
2(m_1+1)(m_1+2)+(p+1)(p-m_1)
$$
$$\geq
2(p+1)(m_1+2)+(p+1)(p-m_1) =(p+1)(m_1+p+4) > 0,
$$
and the conclusion follows.

\end{proof}

The previous proposition describes the behavior of configurations
of linear spaces with non intersecting components. More precisely,
we know that if no quadric is expected then no quadric containing
the configuration exists. Using this knowledge we now show that
generic configurations having disjoint components always have the
expected behavior.

\begin{thm} \label{teorema1}
Let $s \geq 2$, $m_1 \geq \dots \geq m_s \geq 0$, and $m_1+m_2 <n$ and so $\Lambda_i \cap \Lambda_j = \emptyset$ for $i \neq j$.

Let
$$ \Lambda = \Lambda_1 + \dots + \Lambda_s \subset \PP^n$$
be a generic configuration of linear spaces having weight vector
$(m_1, \dots, m_s)$.
Then
$$\dim (I_{\Lambda})_2 = \max \left \{  {n+2\choose 2} - \sum _{i=1}^s {m_i+2 \choose 2} ; 0
\right \} $$

\end{thm}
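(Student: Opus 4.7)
The plan is to bootstrap Theorem \ref{teorema1} from Proposition \ref{prop1} by an additive trick. First, if $\sum_{i=1}^s \binom{m_i+2}{2} \geq \binom{n+2}{2}$, the maximum in the theorem is $0$ and Proposition \ref{prop1} directly yields $\dim(I_\Lambda)_2 = 0$. So the remaining case is
$$T := \binom{n+2}{2} - \sum_{i=1}^s \binom{m_i+2}{2} > 0,$$
and we must prove $\dim(I_\Lambda)_2 = T$ by establishing matching lower and upper bounds.

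For the lower bound, I would use the decomposition $(I_\Lambda)_2 = \bigcap_{i=1}^s (I_{\Lambda_i})_2$ together with the fact that the restriction map $S_2/(I_\Lambda)_2 \to \bigoplus_i S_2/(I_{\Lambda_i})_2$ is injective (it amounts to separating the components). This gives $HF(\Lambda,2) \leq \sum_i HF(\Lambda_i,2) = \sum_i \binom{m_i+2}{2}$, equivalently $\dim(I_\Lambda)_2 \geq T$. This step is routine and requires no special input beyond the disjointness hypothesis.

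For the upper bound, the idea is to enlarge $\Lambda$ by adjoining $T$ generic reduced points $P_1,\dots,P_T \in \PP^n$, producing
$$\Lambda' = \Lambda + P_1 + \cdots + P_T.$$
The weight vector of $\Lambda'$ is obtained from that of $\Lambda$ by appending $T$ zeros, so $m_1,m_2$ are unchanged and $m_1+m_2 < n$ still holds; moreover $\sum \binom{m'_j+2}{2} = \binom{n+2}{2}$ by construction. Since $\Lambda$ was generic and the points are chosen generically, $\Lambda'$ is a generic configuration with its weight vector, so Proposition \ref{prop1} applies and gives $\dim(I_{\Lambda'})_2 = 0$. Each generic point imposes at most one linear condition on quadrics, so
$$\dim(I_\Lambda)_2 \leq \dim(I_{\Lambda'})_2 + T = T,$$
matching the lower bound.

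No serious obstacle is expected, since Proposition \ref{prop1} already contains all the geometric content; the only point to verify carefully is that appending generic points preserves genericity and the hypothesis $m_1+m_2<n$ of Proposition \ref{prop1}, which is immediate because the appended components have dimension $0$.
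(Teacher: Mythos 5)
Your proposal is correct and follows essentially the same route as the paper: the case $\sum_i \binom{m_i+2}{2} \geq \binom{n+2}{2}$ is handled directly by Proposition \ref{prop1}, and otherwise one adjoins $T=\binom{n+2}{2}-\sum_i \binom{m_i+2}{2}$ generic points and applies Proposition \ref{prop1} to the enlarged configuration. The only difference is cosmetic: you spell out the lower bound $\dim(I_\Lambda)_2\geq T$ (via injectivity of the restriction map) and the ``each generic point imposes at most one condition'' upper bound, steps the paper compresses into the single phrase ``since the $x$ points are generic.''
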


\begin{proof}
For $\sum _{i=1}^s {m_i+2 \choose 2}  \geq {n+2 \choose 2}$ the conclusion follows from Proposition
\ref{prop1}.

If  $\sum _{i=1}^s {m_i+2 \choose 2}  < {n+2 \choose 2}$,
let $ \tilde \Lambda$ be the configuration  of linear spaces obtained by adding $x=  {n+2 \choose 2}-\sum _{i=1}^s {m_i+2 \choose 2} $ generic points to $\Lambda$, that is,
$$ \tilde \Lambda = \Lambda_1 + \dots + \Lambda_s+\Lambda_{s+1}+  \dots + \Lambda_{s+x},$$
where
 $\Lambda_{s+1}, \dots, \Lambda_{s+x}$
are $x$ generic points.
By applying Proposition \ref{prop1} to the configuration
$ \tilde \Lambda ,$
of weight vector $(m_1, \dots, m_s,0, \dots,0)$,
we have $\dim (I_{\tilde \Lambda})_2 =0$, hence, since the $x$ points are generic, we obtain
 $\dim (I_{\Lambda})_2 =x$.

\end{proof}
From  Theorem \ref{teorema1} we obtain
the following easy remark, which we will use in Section 4.:

\begin{rem} \label{4spaziuguali}
If $\Lambda \subset \Bbb P^n$ is  a generic configuration of four
linear spaces with  the same dimension $m$, and  $n=2m+1$, then
$\dim (I_{\Lambda})_2 =0$. In fact
$$
 {n+2 \choose 2} - \sum _{i=1}^4 {m+2 \choose 2} =-m-1<0.
$$
Moreover, if $\Lambda \subset \tilde \Lambda \subset \Bbb P^n$, then obviously $\dim (I_{\tilde \Lambda})_2 =0$.

\end{rem}

\section{Intersecting spaces}

In this section we deal with the case of generic configurations of
linear spaces for which some of the components intersect. The
configurations are still generic but now intersections arise
because of dimension reasons. In this situation it is harder to
express the notion of ``expected behavior" in simple terms.

Before stating the Main Theorem of this section, we want to make an easy, but useful, observation about projections of families of quadrics which contain a common linear space in their vertex.

\begin{rem} \label{rem} Let
$$ \Lambda = \Lambda_1 + \dots + \Lambda_s \subset \Bbb P^n$$
be a generic configuration of linear spaces having weight vector
$$ L = (m_1, \dots, m_s).$$ Assume that the  forms in $(I_{\Lambda})_2$ define cones,
each of which has vertex  containing a fixed linear space, $V$, of dimension $d$.  Consider the projection from $V$ onto a generic linear space $H \subset\Bbb P^n$ of complementary dimension $n^\prime = n-d-1$.  Then  each $\Lambda _i$ is projected from $V$ onto a linear space $\Lambda' _i \subset H$ where
$$
\dim \Lambda'_i := m_i^\prime  = {m_i - \dim (V \cap \Lambda_i) -1}
$$
(if  $V \cap \Lambda_i = \emptyset $,  we use the convention that $\dim (V \cap \Lambda_i) = -1$ ).

We have
$$ \dim  (I_{\Lambda})_2 = \dim (I_{\Lambda '})_2$$
where $$\Lambda '  =\Lambda'_1 + \dots + \Lambda'_s \subset  H \simeq \PP^{n^\prime} = \Bbb P^{n-d-1}$$
 is a generic configuration of linear spaces and $ I_{\Lambda '}$ is the ideal of $\Lambda^\prime$ in the homogeneous coordinate ring of $\PP^{n\prime}$.

\end{rem}

We now state and prove the Main Theorem of this section.

\begin{thm}\label{teorema2}

Let $m_1 \geq \dots \geq m_s \geq 0$, and $m_1+m_2 \geq n$. Let
$$ \Lambda = \Lambda_1 + \dots + \Lambda_s \subset \Bbb P^n$$
be a generic configuration of linear spaces having weight vector
$(m_1, \dots, m_s)$, and let
$$\tau = \max \{ i \in \Bbb N | m_1+m_i \geq n \} ,
$$
and
$$
v  = \sum _{i=2}^{\tau} (m_1+m_i -n+1).
$$
Then the following statements hold.
\begin{enumerate}
\item[(i)] If $v \geq m_1+1$, then $\dim (I_{\Lambda})_2 =0$.

\item[(ii)] If $v$ is such that  $2m_1-n+2 \leq v \leq m_1$ and
\begin{enumerate}
\item if  $\tau \geq 4$, then $\dim (I_{\Lambda})_2 =0$;

\item if  $\tau =3$, $s \geq 4$, $2n \leq  \sum _{i=1}^{4} m_i +2$, then $\dim (I_{\Lambda})_2 =0$;

\item  if  $\tau =3$,  $s \geq 4$ and $2n \geq  \sum _{i=1}^{4} m_i +2$,
or $\tau =s=3$,  then
$$\dim (I_{\Lambda})_2 $$
$$=\max  \left \{  {n+2\choose 2} - \sum _{i=1}^s {m_i+2 \choose 2}
+\sum _ {i,j=1,2,3} {m_i+m_j-n+2 \choose 2}  ; 0
\right \} .$$

\end{enumerate}

\item[(iii)] If $v \leq 2m_1-n+1 $,   then
$$\dim (I_{\Lambda})_2 $$
$$=\max  \left \{  {n+2\choose 2} - \sum _{i=1}^s {m_i+2 \choose 2}
+\sum _{i=2}^{\tau}  {m_1+m_i-n+2 \choose 2}  ; 0
\right \} .$$
\end{enumerate}

\end{thm}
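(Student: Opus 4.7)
The engine for the whole proof is a single geometric observation about quadrics. If a quadric $Q \subset \PP^n$ contains two linear subspaces $A,B$ whose span is all of $\PP^n$, then at any point $p \in A\cap B$ the tangent cone $T_pQ$ contains $A+B=\PP^n$, so $p$ must be a singular point of $Q$; thus $A\cap B \subseteq V(Q)$. Applied in our setting, whenever $m_i+m_j \geq n$ (so $\langle \Lambda_i,\Lambda_j\rangle = \PP^n$ by genericity), every $Q \in (I_\Lambda)_2$ has $V(Q) \supseteq \Lambda_i \cap \Lambda_j$. Let $\Sigma$ be the span of all such forced intersections. Then Remark \ref{rem} allows us to project from $\Sigma$ to a generic $\PP^{n-\dim\Sigma-1}$, obtaining a configuration $\Lambda'$ with $\dim(I_{\Lambda'})_2 = \dim(I_\Lambda)_2$. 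The plan is to analyse $\Lambda'$ case by case, using only Theorem \ref{teorema1} (when $\Lambda'$ has disjoint components) and Remark \ref{4spaziuguali} (when it contains four components of the same dimension in a projective space of the critical size).

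In case (i), the spaces $\Lambda_1\cap\Lambda_i$ for $i=2,\dots,\tau$ already fill $\Lambda_1$, so $\Sigma\supseteq\Lambda_1$; projecting from $\Lambda_1$ sends each $\Lambda_i$ with $i\leq\tau$ to a $(n-m_1-1)$-dimensional subspace of $\PP^{n-m_1-1}$, i.e.\ to the full ambient, and $(I_{\Lambda'})_2=0$. In case (iii), the inequality $v\leq 2m_1-n+1$ forces $m_i+m_j<n$ for all $i,j\geq 2$, so only the pairs $(1,i)$ contribute and $\Sigma\subseteq \Lambda_1$ with $\dim\Sigma=v-1$. A direct dimension check shows that the projected components in $\PP^{n-v}$ are pairwise disjoint; applying Theorem \ref{teorema1} and expanding via the identity
\[
\binom{a+2}{2}+\binom{b+2}{2}-\binom{a+b+3}{2}=-(a+1)(b+1)
\]
yields exactly the formula stated in (iii).

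In case (ii) we have $v\geq 2m_1-n+2$, which one checks translates into $m_2+m_3\geq n$; hence the pair $(2,3)$ also enters $\Sigma$. For (ii)(c), the hypothesis $2n\geq \sum_{i=1}^4 m_i+2$ (or $s=\tau=3$) guarantees $\Lambda_1\cap\Lambda_2\cap\Lambda_3=\emptyset$ and $\Lambda_i\cap\Sigma=\emptyset$ for $i\geq 4$, so after projection three of the components acquire the common dimension $\mu=2n-m_1-m_2-m_3-2$ in an ambient $\PP^{2\mu+1}$, while the rest keep dimension $m_i$, and all are pairwise disjoint; applying Theorem \ref{teorema1} and again using the binomial identity above (now producing three cancellations) yields the formula of (ii)(c). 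For (ii)(a) the extra assumption $\tau\geq 4$ provides a fourth component $\Lambda'_4$ of dimension $\geq\mu$ in $\PP^{2\mu+1}$, while for (ii)(b) the inequality $2n\leq \sum_{i=1}^4 m_i+2$ is exactly what makes $\Lambda_4$ absorb enough of $\Sigma$ so that the projected $\Lambda'_4$ again has dimension $\geq\mu$. In both subcases, Remark \ref{4spaziuguali} applied to four components of dimension $\geq\mu$ in $\PP^{2\mu+1}$ gives $\dim(I_{\Lambda'})_2=0$.

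The step I expect to demand the most care is (ii)(b): one must compute the dimension of $\Lambda_4\cap\Sigma$ for the enlarged $\Sigma$ (now containing $\Lambda_2\cap\Lambda_3$ as well) and show that it is at most $m_4-\mu-1$, so that the projected $\Lambda'_4$ retains dimension at least $\mu$. The generic-intersection formula gives $\dim(\Lambda_4\cap\Sigma)\leq m_4+2m_1+2m_2+2m_3-4n+2$, and one needs to compare this with $m_4-\mu-1=m_4-2n+m_1+m_2+m_3+1$; the difference works out to $\mu+1\geq 0$, but only after correctly handling the possibility that $\Lambda_1\cap\Lambda_2\cap\Lambda_3$ or $\Lambda_4\cap(\Lambda_2\cap\Lambda_3)$ is nonempty. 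Once this dimension bookkeeping is carried through, Remark \ref{4spaziuguali} finishes (ii)(a)--(ii)(b), Theorem \ref{teorema1} finishes (iii) and (ii)(c), and the case (i) is immediate, completing all statements.
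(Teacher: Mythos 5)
Your overall strategy is the paper's: the forced singular locus of any quadric through $\Lambda$, projection as in Remark \ref{rem}, and reduction to Theorem \ref{teorema1} or Remark \ref{4spaziuguali}; your treatment of (i), (iii), (ii)(b) and (ii)(c) matches the paper's proof in outline. The genuine gap is case (ii)(a). Your claim that in case (ii) the inequality $v \geq 2m_1-n+2$ ``translates into $m_2+m_3 \geq n$'' is only correct when $\tau=3$; for $\tau\geq 4$ it can fail. For example, take $n=19$ and $(m_1,\dots,m_5)=(10,9,9,9,9)$: then $\tau=5$, $v=4$, and $2m_1-n+2=3\leq v\leq m_1=10$, so we are in case (ii)(a), yet $m_2+m_3=18<n$, so the pair $(2,3)$ contributes nothing to your $\Sigma$. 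Moreover, even when $\tau\geq 4$ and $m_2+m_3\geq n$, your $\Sigma$ also contains $\Lambda_{1,4},\dots,\Lambda_{1,\tau}$ (and possibly further pairs), so the projected configuration is not ``three components of dimension $\mu=2n-m_1-m_2-m_3-2$ in $\PP^{2\mu+1}$ plus a fourth of dimension $\geq\mu$''; those numbers are correct only for $\tau=3$, i.e.\ for (ii)(b)--(c). In the example above, projecting from $\Sigma=V$ leaves a $\PP^{6}$ and four $\PP^{8}$'s inside $\PP^{15}$, whose components still intersect pairwise, so neither Theorem \ref{teorema1} nor Remark \ref{4spaziuguali} applies directly and your argument stalls.

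The missing idea is an \emph{iterated} projection rather than a single projection from the span of the intersections that exist at the start. In (ii)(a) one first projects only from $V=\langle \Lambda_{1,2},\dots,\Lambda_{1,\tau}\rangle$, of dimension $v-1$; in $\PP^{n-v}$ the images of $\Lambda_2$ and $\Lambda_3$ each have dimension $n-m_1-1$, and precisely because $v\geq 2m_1-n+2$ their dimensions sum to at least $n-v$, so $\Lambda'_2\cap\Lambda'_3\neq\emptyset$ even when $\Lambda_2\cap\Lambda_3=\emptyset$ upstairs (as in the example). This newly created intersection lies in the vertex of every quadric through the projected configuration, so one projects a second time, from $\Lambda'_2\cap\Lambda'_3$, landing in $\PP^{2(m_1-v)+1}$ with four components of dimension at least $m_1-v$, where Remark \ref{4spaziuguali} gives $\dim(I)_2=0$. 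Your one-shot span $\Sigma$ cannot see intersections that appear only after the first projection, so as written the proof of (ii)(a) does not close; the remaining cases are fine modulo the routine disjointness checks and the binomial identities you indicate.
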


\begin{proof} \ We fix the following notation:
$$
\Lambda_{i,j} = \Lambda_i \cap \Lambda_j \hbox{ and } V = \langle \Lambda_{1,2},\Lambda_{1,3}, \dots, \Lambda_{1,\tau} \rangle \subset \Lambda_1 .
$$
Since $v= \sum _{i=2}^{\tau} (\dim \Lambda _{1,i}+1)$ and the $\Lambda_i$ are generic linear spaces, then the linear span $V$ has dimension
\begin {equation} \label{span}
\dim V = \min \{ v-1; m_1\},
\end {equation}
and the points of $V$ are singular points for the quadrics defined by the forms of $I_{\Lambda}$.  Hence the quadrics  through $\Lambda$ are cones whose vertex contains $V$.

\medskip

(i) \   Since $v \geq m_1 + 1$ we must have $\dim V = m_1$. Projecting from $V$, and using the notation of Remark \ref{rem}, we have $n' = n-m_1-1$ and  $$m'_2=m_2-\dim \Lambda_{1,2} -1= m_2 - (m_1+m_2-n)-1= n-m_1-1.$$  Hence $H = \Lambda_2^\prime$ and so $(I_{\Lambda^\prime})_2 = (0)$ and, by Remark \ref{rem}, so is $(I_\Lambda)_2$.

\medskip

(ii) \  First note that  if $\tau=2$ , since $v \geq 2m_1-n+2 $, we have a contradiction:
$$0 \leq v-2m_1+n-2= m_1 + m_2- n+ 1 -2m_1+n-2=-m_1+m_2-1 \leq -1.
$$
Hence $\tau \geq 3$.

(a)  By (\ref{span}) we have $\dim V = v-1$. For $2\leq i \leq \tau$, we have $V \cap \Lambda_i =   \Lambda_1 \cap \Lambda_i $, so  $\dim V \cap \Lambda_i =m_1+m_i-n$, and for $i > \tau$, $V \cap \Lambda_i = \emptyset$. By projecting from $V$ into $H $ we obtain
$$ \Lambda' = \Lambda'_1 +\dots + \Lambda' _s  \subset H \simeq  \Bbb P^{n'}; \ \ \ \
 \Lambda'_i \simeq  \Bbb  P^{m_i'}
$$
$$n'= n-v; \ \ \ m'_1=  m_1-v;
$$
$$ m'_i=  n-m_1-1,\ \ {\rm for} \ \ 2 \leq i \leq \tau; \ \ \ \ m'_i=  m_i,\ \ {\rm for} \ \  i > \tau.$$

For $i=2,3,4$ we have $ n'-m'_1-m'_i =n-v- (m_1-v)-(n-m_1-1)=1$, hence
$\Lambda'_1 \cap \Lambda'_i  = \emptyset .$

Since $v \geq 2m_1-n+2$,  we have
$$
\dim (\Lambda'_2 \cap \Lambda'_3) = m'_2+m'_3-n' =v-2m_1+n-2  \geq 0,
$$
 hence
$\Lambda'_2 \cap \Lambda'_3 \neq \emptyset$.

We again apply Remark \ref{rem}, but this time to $\Lambda^\prime$ i.e. by projecting $\Lambda'$ from $\Lambda'_2 \cap \Lambda'_3$ into a linear space $H' \simeq \Bbb P^{ n''}$, we get

$$ \Lambda'' = \Lambda''_1 +\dots + \Lambda'' _s  \subset H' \simeq  \Bbb P^{n''}; \ \ \ \  \
\Lambda''_i \simeq  \Bbb P^{m_i''};
$$
$$
n''= n'-\dim \Lambda'_2 \cap \Lambda'_3 -1=2m_1-2v+1; $$
$$m''_1=  m_1-v;
$$
$$ m''_2=m''_3=  n-m_1-1-(v-2m_1+n-2+1)=m_1-v ;
$$
$$m''_4=  n-m_1-1- \dim \Lambda'_4 \cap ( \Lambda'_2 \cap \Lambda'_3)-1 \geq m_1-v.
$$
Hence by Remark \ref {4spaziuguali}
 it follows that $\dim (I_{\Lambda})_2= \dim (I_{\Lambda''})_2=0.$
  \medskip

(b),(c) \ In these cases $v \geq 2m_1-n+2$ implies
 $$ 2m_1+m_2+m_3 -2n+2 -2m_1+n-2 \geq 0, $$
 that is $m_2+m_3 \geq n$. It follows that $\Lambda_2 \cap \Lambda_3 \neq \emptyset$.

 Let $W = <\Lambda_{1,2}, \Lambda_{1,3}, \Lambda_{2,3}>$.  By (\ref{span}),
 $$\dim <\Lambda_{1,2}, \Lambda_{1,3}> =  \dim \Lambda_{1,2} +\dim \Lambda_{1,3}+1
 = 2m_1+m_2+m_3-2n+1 .$$
 Since $v \leq m_1$, we have
 $ 2m_1+m_2+m_3 -2n+2 -m_1\leq 0$, and from this inequality it follows that
 $\Lambda_{2,3} \cap \Lambda_1 = \emptyset$. Thus
$$\dim W = \dim \Lambda_{1,2} +\dim \Lambda_{1,3}+ \dim  \Lambda_{2,3} +2
$$
$$
=  2m_1+2m_2+2m_3-3n+2.
$$

  By projecting $\Lambda$ from
$W$ into a linear space $H \simeq \Bbb P^{n'}$ (following Remark \ref{rem} using $W$ instead of $V$), we get

$$ \Lambda' = \Lambda'_1 +\dots + \Lambda'_s  \subset H \simeq  \Bbb P^{n'}; \ \ \ \  \
\Lambda'_i \simeq  \Bbb P^{m_i'};
$$
$$
n'= n-\dim W -1=4n-2m_1-2m_2-2m_3-3; $$
$$m'_1=  m'_2 = m'_3  = 2n-m_1-m_2-m_3-2
$$
and, for  $s \geq4$, we have also :
$$m'_4=  m_4- \dim \Lambda_4 \cap W -1.
$$
Note that for $s=3$ we may apply Theorem \ref{teorema1}.

 Let $s \geq4$.
Now, $\Lambda_4 \cap W \neq \emptyset$ if and only if
$$m_4 + 2m_1+2m_2+2m_3-3n+2 \geq n,
$$
hence for $m_4 \geq 4n -2 m_1-2m_2-2m_3-2$ we have
$$m'_4 = m_4 - (m_4 + 2m_1+2m_2+2m_3-3n+2 -n)-1= n' ,
$$  that is $\Lambda'_4 \simeq \Bbb P^{n'}$, and  $\dim (I_{\Lambda})_2= \dim (I_{\Lambda'})_2=0$ immediately follows.

For $m_4 < 4n -2 m_1-2m_2-2m_3-2$ we have
$m'_4=  m_4 $. Then in case (ii)(b), where  $2n \leq  \sum _{i=1}^{4} m_i +2$, we obtain
$$m'_4\geq m_1'$$
Since $n' = 2m_1'+1$, by Remark \ref {4spaziuguali} we have
$\dim (I_{\Lambda})_2= \dim (I_{\Lambda'})_2=0,$ and this completes the proof of case (ii)(b).

If we are in case (ii)(c) we have $2n >  \sum _{i=1}^{4} m_i +2$ hence $m_1+m_2+m_3 <2n-2-m_4$. It follows that  $m_4 -4n +2 m_1+2m_2+2m_3+2< m_4 - 4n +2 ( 2n-2-m_4)=-4-m_4<0$, so
$\Lambda_4 \cap W = \emptyset$ and $m'_4=  m_4\leq m_1'. $ Since $n' = 2m_1'+1$, in order to compute the dimension of $ (I_{\Lambda'})_2$,  we apply Theorem \ref{teorema1} and we obtain:
$$\dim (I_{\Lambda'})_2 = \max \left \{  {n'+2\choose 2} - \sum _{i=1}^s {m'_i+2 \choose 2} ; 0
\right \} $$
where $m'_1 = m_i$ for $i >3$.

In case (ii)(c) with $s=3$, as we noted above, by Theorem \ref{teorema1}  we have:
$$\dim (I_{\Lambda'})_2 = \max \left \{  {n'+2\choose 2} - \sum _{i=1}^3 {m'_i+2 \choose 2} ; 0
\right \} $$
If we prove that  for $s\geq4$
\begin{equation}\label{s>3}
{n+2\choose 2} - \sum _{i=1}^s {m_i+2 \choose 2}
+\sum _ {i,j=1,2,3} {m_i+m_j-n+2 \choose 2}
\end{equation}
$$= {n'+2\choose 2} - \sum _{i=1}^s {m'_i+2 \choose 2}
$$

and for $s=3$:
\begin{equation}\label{s=3}
{n+2\choose 2} - \sum _{i=1}^3 {m_i+2 \choose 2}
+\sum _ {i,j=1,2,3} {m_i+m_j-n+2 \choose 2} \end{equation}
$$
={n'+2\choose 2} - \sum _{i=1}^3 {m'_i+2 \choose 2}
$$
we are done. Since for $i >3$ we have  $m'_1 = m_i$, if  the equality (\ref{s=3}) holds, then
(\ref{s>3}) holds.
Recall that
$
n'=4n-2m_1-2m_2-2m_3-3 $ and
$m'_1=  m'_2 = m'_3  = 2n-m_1-m_2-m_3-2.
$
Now  (\ref{s=3})  holds if and only if
$$(n+2)(n+1)-(m_1+2)(m_1+1)-(m_2+2)(m_2+1)-(m_3+2)(m_3+1)$$ $$
+ (m_1+m_2-n+2) (m_1+m_2-n+1)+ (m_1+m_3-n+2) (m_1+m_3-n+1)$$
$$+ (m_2+m_3-n+2) (m_2+m_3-n+1)$$
$$- (2(2n-m_1-m_2-m_3)-1)(2(2n-m_1-m_2-m_3)-2)$$
$$+3(2n-m_1-m_2-m_3)(2n-m_1-m_2-m_3-1) =0,$$
and this equality is easy to check with a direct computation.

\medskip

(iii)  Since $v \leq 2m_1-n+1 \leq m_1$, we have $v-1 \leq 2m_1 - n < m_1$ and so $\dim V = v-1$.  Projecting from $V$ onto $H$ (as in Remark \ref{rem}) we obtain
$$ \Lambda' = \Lambda'_1 +\dots + \Lambda' _s  \subset H \simeq  \Bbb P^{n'}; \ \ \ \
 \Lambda'_i \simeq  \Bbb  P^{m_i'}
$$
$$n'= n-v; \ \ \ m'_1=  m_1-v;
$$
$$ m'_i=  n-m_1-1,\ \ {\rm for} \ \ 2 \leq i \leq \tau; \ \ \ \ m'_i=  m_i,\ \ {\rm for} \ \  i > \tau.$$
(Recall that  for $i >\tau$, we have  $\Lambda_1 \cap \Lambda_i  = \emptyset $).

It is not difficult to check that $n' > m'_i+m'_j $, for $i \neq j$ i.e. the generic linear spaces $\Lambda_i^\prime$ do not intersect.  So we may use Theorem \ref{teorema1} in order to compute $\dim (I_{\Lambda'})_2$ and we get
$$
\dim (I_{\Lambda'})_2 = \max\{ {n'+2\choose 2} - \sum _{i=1}^s {m'_i+2 \choose 2}; 0 \} .
$$

If  we show that
\begin{equation}\label{ultima}
 {n+2\choose 2} - \sum _{i=1}^s {m_i+2 \choose 2}
+\sum _{i=2}^{\tau}  {m_1+m_i-n+2 \choose 2}
\end{equation}
$$=
{n'+2\choose 2} - \sum _{i=1}^s {m'_i+2 \choose 2},
$$
then the proof of part $(3)$ of the theorem is complete. A long, and tedious, calculation leads us to prove that
$$(n+2)(n+1)-  (m_1+2)(m_1+1) $$
$$+ \sum _{i=2}^{\tau} (-(m_i+2) (m_i+1)+(m_1+m_i-n+2)(m_1+m_i-n+1) )$$
$$- (n-v+2)(n-v+1) + (m_1-v+2)(m_1-v+1) +\tau (n-m_1+1)(n-m_1) =0,
$$
and this can be checked by an easy direct computation.
\end{proof}

Finally we can summarize the results of the paper in a theorem
giving a complete description of $I_2(\Lambda)$, and hence of
$HF(\Lambda,2)$, for a generic configuration of linear spaces
$\Lambda$.
\begin{thm}\label{summarythm}
Let $m_1 \geq \dots \geq m_s \geq 0$.  Consider a generic
configuration of linear spaces of weight vector $(m_1, \dots,
m_s)$
$$ \Lambda = \Lambda_1 + \dots + \Lambda_s \subset \Bbb P^n,$$
and let $I_\Lambda$ be its defining ideal.

If $m_1+m_2<n$, then

$$\dim (I_{\Lambda})_2 = \max \left \{  {n+2\choose 2} - \sum _{i=1}^s {m_i+2 \choose 2} ; 0
\right \} .$$

If $m_1+m_2\geq n$, we let
$$\tau = \max \{ i \in \Bbb N | m_1+m_i \geq n \},
$$
$$v  = \sum _{i=2}^{\tau} (m_1+m_i -n+1),
$$
then the following statements hold:
\begin{enumerate}
\item[(i)] if $v \geq m_1+1$, then $\dim (I_{\Lambda})_2
=0$;

\item[(ii)] if $v$ is such that $2m_1-n+2 \leq v \leq m_1$, and
\begin{enumerate}

\item if  $\tau \geq 4$, then $\dim (I_{\Lambda})_2
=0$;

\item if  $\tau =3$, $s \geq 4$, $2n \leq  \sum
_{i=1}^{4} m_i +2$, then $\dim (I_{\Lambda})_2 =0$;

\item if  $\tau =3$,  $s \geq 4$ and $2n \geq  \sum
_{i=1}^{4} m_i +2$, or $\tau =s=3$,  then
$$\dim (I_{\Lambda})_2 $$
$$
=\max  \left \{  {n+2\choose 2} - \sum _{i=1}^s {m_i+2 \choose 2}
+\sum _ {i,j=1,2,3} {m_i+m_j-n+2 \choose 2}  ; 0
\right \} .$$

\end{enumerate}

\item[(iii)] If $v \leq 2m_1-n+1 $,   then
$$\dim (I_{\Lambda})_2 $$
$$=\max  \left \{  {n+2\choose 2} - \sum _{i=1}^s {m_i+2 \choose 2}
+\sum _{i=2}^{\tau}  {m_1+m_i-n+2 \choose 2}  ; 0
\right \} .$$
\end{enumerate}
\end{thm}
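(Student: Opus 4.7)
The proof will be essentially a bookkeeping exercise: Theorem \ref{summarythm} is a consolidated restatement of the two main results already established in the paper, namely Theorem \ref{teorema1} (the non-intersecting case) and Theorem \ref{teorema2} (the intersecting case). The plan is therefore to split on whether $m_1+m_2<n$ or $m_1+m_2\ge n$ and invoke the appropriate previous theorem.

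First I would handle the case $m_1+m_2<n$. If $s=1$, the statement is trivial (a single linear space imposes the expected number of conditions on quadrics). If $s\ge 2$, then since $m_1\ge m_i$ for all $i$, the hypothesis $m_1+m_2<n$ forces $\Lambda_i\cap\Lambda_j=\emptyset$ for all $i\ne j$, so Theorem \ref{teorema1} applies verbatim and delivers exactly the claimed formula
\[
\dim (I_{\Lambda})_2 = \max\!\left\{\binom{n+2}{2}-\sum_{i=1}^s\binom{m_i+2}{2};\,0\right\}.
\]

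Next I would handle the case $m_1+m_2\ge n$, which is where $\tau$ and $v$ enter. Here the three subcases (i), (ii)(a)--(c), (iii) of Theorem \ref{summarythm} are identical in hypothesis and in conclusion to the corresponding three cases of Theorem \ref{teorema2}. The definitions of $\tau$ and $v$ match (both use $\tau=\max\{i:m_1+m_i\ge n\}$ and $v=\sum_{i=2}^\tau(m_1+m_i-n+1)$), and the trichotomy $v\ge m_1+1$, $2m_1-n+2\le v\le m_1$, $v\le 2m_1-n+1$ exhausts all possibilities. So each subcase is dispatched by quoting the corresponding clause of Theorem \ref{teorema2}.

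There is no technical obstacle here, since all of the substantive work -- the Fano-variety dimension count (Theorem \ref{Harris} and Corollary \ref{corHarris}), the rank-by-rank inequality of Lemma \ref{lemma}, the inductive arguments of Proposition \ref{prop1}, and the projection-from-the-vertex reductions of Remark \ref{rem} in the proof of Theorem \ref{teorema2} -- has already been carried out. The only thing worth double-checking is that the case distinctions in the two statements line up, which I have already noted they do. Thus the proof can be written in essentially one line: the result follows by combining Theorem \ref{teorema1} with Theorem \ref{teorema2}.
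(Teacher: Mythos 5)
Your proposal is correct and matches the paper exactly: the paper presents Theorem \ref{summarythm} as a mere summary, with the case $m_1+m_2<n$ covered by Theorem \ref{teorema1} and the case $m_1+m_2\geq n$ covered verbatim by Theorem \ref{teorema2}, offering no additional argument. Your extra remark on the trivial case $s=1$ is a harmless (and correct) bit of added care.
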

\qed

\section{An application: decomposition of polynomials}\label{applicationsection}

In this section we will consider the problem of writing homogeneous polynomials in a special way  (see Remark \ref{polirem}).  A classical result in this direction says that a quadratic form in $n+1$ variables can always be written as the sum of at most $n+1$ squares of linear forms.

We consider the rings $S=\mathbb{C}[x_0,\ldots,x_n]$ and $T=\mathbb{C}[y_0,\ldots,y_n]$, and we denote by $S_d$ and $T_d$ their homogeneous pieces of degree $d$. We consider $T$ as an
$S$-module by letting the action of $x_i$ on $T$ be that of partial differentiation with respect to $y_i$. We also use some basic notions about apolarity (for more on this see \cite{Ge,IaKa}).

Let $I\subset S$ be a homogeneous ideal and denote by $I^\perp\subset T$ the submodule of $T$ annihilated by every element of $I$. We recall that $(I_d)^\perp=(I^\perp)_d$.

Given linear forms $l_{i,j}\in T_1,i=1,\ldots,r,j=0,\ldots,n_i$ we
ask the question $(\star)$:

{\it\noindent does the following vector space equality
hold
$$T_d=\left(\mathbb{C}[l_{1,0},\ldots,l_{1,n_1}]\right)_d+\ldots
+\left(\mathbb{C}[l_{s,0},\ldots,l_{s,n_s}]\right)_d,$$ where
$\left(\mathbb{C}[l_{i,0},\ldots,l_{i,n_i}]\right)_d$ is the
degree $d$ part of the subring of $T$ generated by the $l_{i,j}$'s
for a fixed $i$}.

\medskip

\begin{rem}\label{polirem} Notice that if $(\star)$ has an affirmative answer then any degree $d$ form
$f(y_0, \dots, y_n)$ can be written as
$$
f_1(l_{1,0},\ldots,l_{1,n_1}) + \ldots
+f_s(l_{s,0},\ldots,l_{s,n_s})
$$
for suitable homogeneous polynomials $f_i$.
\end{rem}

The connection with configurations of linear spaces is given by the
following results.

\begin{lem} Let $\Lambda\subset\PP^n$ be an $i$ dimensional
linear space having defining ideal $I$. Then, for any $d$, we have
the following:

$$ (I^\perp)_d =\left(\mathbb{C}[l_0,\ldots,l_i]\right)_d $$

where the linear forms $l_i\in T_1$ generate $(I^\perp)_1$.
\end{lem}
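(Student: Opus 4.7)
The plan is to reduce to coordinates and then verify the equality directly, backing it up with a dimension count. Since $\Lambda \subset \PP^n$ is a linear $i$-dimensional space, its defining ideal $I$ is generated in degree $1$ by $n-i$ linearly independent linear forms, so $\dim I_1 = n-i$ and therefore $\dim (I^\perp)_1 = (n+1)-(n-i) = i+1$. More generally, from the Hilbert function of $\Lambda \simeq \PP^i$ one gets $\dim I_d = \binom{n+d}{d} - \binom{i+d}{d}$, hence
\[
\dim (I^\perp)_d = \binom{n+d}{d} - \dim I_d = \binom{i+d}{d}.
\]

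Next I would establish the inclusion $(\mathbb{C}[l_0,\dots,l_i])_d \subseteq (I^\perp)_d$. Because $I$ is generated in degree one, it suffices to check that every monomial $l_0^{a_0}\cdots l_i^{a_i}$ of total degree $d$ is annihilated by every $h \in I_1$. But $h$ acts as a constant-coefficient first-order differential operator, so by the Leibniz/product rule
\[
h\circ(l_0^{a_0}\cdots l_i^{a_i}) \;=\; \sum_{j} a_j\,(h\circ l_j)\,l_0^{a_0}\cdots l_j^{a_j-1}\cdots l_i^{a_i},
\]
and each factor $h\circ l_j$ vanishes since $l_j \in (I^\perp)_1$ and $h \in I_1$. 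Iterating this (or equivalently using that $I_d = I_1\cdot S_{d-1}$), one obtains $g \circ (l_0^{a_0}\cdots l_i^{a_i}) = 0$ for every $g \in I_d$, giving the claimed inclusion.

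To close the argument I would compute the dimension of the right-hand side. Since $l_0,\dots,l_i$ are linearly independent linear forms in the polynomial ring $T$, they are algebraically independent, so $\mathbb{C}[l_0,\dots,l_i]$ is a polynomial ring in $i+1$ variables and
\[
\dim \bigl(\mathbb{C}[l_0,\dots,l_i]\bigr)_d = \binom{i+d}{d}.
\]
Combined with the inclusion and the dimension count for $(I^\perp)_d$, equality follows.

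There is essentially no serious obstacle here; the only point that requires any care is the verification that the multiplicative subring generated by $(I^\perp)_1$ actually lies inside $I^\perp$ in all degrees, which is what the Leibniz computation above handles. (If one preferred, one could dispatch the whole lemma even faster by choosing coordinates so that $I = (x_{i+1},\dots,x_n)$; then $(I^\perp)_d$ is exactly the space of polynomials in $y_0,\dots,y_i$ of degree $d$, and any basis $l_0,\dots,l_i$ of $(I^\perp)_1$ generates the same subring.)
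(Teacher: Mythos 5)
Your proof is correct, but it takes a different (and more elaborate) route than the paper, whose entire proof is the one-line observation that after a linear change of variables one may assume $I=(x_0,\ldots,x_{n-i-1})$, so that $(I^\perp)_d$ is visibly the degree-$d$ part of $\mathbb{C}[y_{n-i},\ldots,y_n]$ --- this is exactly your closing parenthetical remark. Your main argument instead stays coordinate-free: the Leibniz computation shows that the subring generated by $(I^\perp)_1$ is annihilated by $I_1$, hence by $I_d=I_1\cdot S_{d-1}$, giving the inclusion $(\mathbb{C}[l_0,\ldots,l_i])_d\subseteq (I^\perp)_d$; then the perfect pairing $S_d\times T_d$ together with the Hilbert function of $\Lambda\simeq\PP^i$ gives $\dim (I^\perp)_d=\binom{i+d}{d}$, which matches $\dim(\mathbb{C}[l_0,\ldots,l_i])_d$ since the $i+1$ generators of the $(i+1)$-dimensional space $(I^\perp)_1$ are necessarily linearly, hence algebraically, independent. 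All steps check out. What the comparison buys: the paper's coordinate argument is shorter and really is ``obvious''; your version isolates exactly which hypotheses matter (generation of $I$ in degree $1$ for the inclusion, the Hilbert function of a linear space for the equality), so the inclusion half of your argument applies verbatim to any ideal generated by linear forms, at the cost of a dimension count that the coordinate choice renders unnecessary.
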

\begin{proof} Obvious, since after a linear change of variables, we may assume
$$I=(x_0,\ldots,x_{n-i-1}).$$
\end{proof}

\begin{prop}
Let $\Lambda=\Lambda_1+\ldots+\Lambda_s\subset\PP^n$ be a
configuration of linear spaces having defining ideal $I$ and such
that $\dim \Lambda_i=n_i$. Then, for any $d$, the following holds:
$$ (I^\perp)_d=\left(\mathbb{C}[l_{1,0},\ldots,l_{1,n_1}]\right)_d+\ldots +\left(\mathbb{C}[l_{s,0},\ldots,l_{s,n_s}]\right)_d$$
where the linear forms $l_{i,j}\in T_1$ are such that the degree $1$ piece of $(l_{i,0},\ldots,l_{i,n_i})^\perp$ generates the ideal of $\Lambda_i$.
\end{prop}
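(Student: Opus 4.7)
My approach is to reduce the proposition to a purely formal apolarity identity. The first step is to observe that the defining ideal of the union $\Lambda = \Lambda_1 + \cdots + \Lambda_s$ is the intersection of the defining ideals of the components, and this equality passes to each graded piece: $I_d = \bigcap_{i=1}^s (I_{\Lambda_i})_d$ as $\CC$-subspaces of $S_d$. By definition $(I^\perp)_d = (I_d)^\perp$, so I am reduced to computing the annihilator of $\bigcap_i (I_{\Lambda_i})_d$ in $T_d$ under the apolarity pairing.

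Second, I would invoke the fact that the apolarity pairing $S_d \times T_d \to \CC$ given by differentiation is perfect. For any perfect pairing of finite-dimensional vector spaces, the annihilator operation is an inclusion-reversing involution on subspaces, and hence exchanges intersections with sums:
\[ \Bigl(\bigcap_{i=1}^s W_i\Bigr)^{\!\perp} \;=\; \sum_{i=1}^s W_i^\perp \qquad\text{for any subspaces } W_i\subseteq S_d. \]
Applying this with $W_i = (I_{\Lambda_i})_d$ yields $(I^\perp)_d = \sum_{i=1}^s (I_{\Lambda_i}^\perp)_d$.

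Third, the preceding Lemma identifies each summand as $\CC[l_{i,0},\ldots,l_{i,n_i}]_d$, where $l_{i,0},\ldots,l_{i,n_i} \in T_1$ are linear forms whose annihilator in $S_1$ generates $I_{\Lambda_i}$. This is precisely the hypothesis placed on the $l_{i,j}$ in the statement, so concatenating the three identifications produces the desired equality.

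I expect no real obstacle in carrying this out: the ideal-of-a-union step is standard (unions of linear spaces are reduced, so $I_\Lambda = \bigcap_i I_{\Lambda_i}$), and the third step is a direct citation of the Lemma. The one step with any content is the perp-interchange identity, which is routine linear algebra for a perfect pairing — one proves $(\sum W_i^\perp)^\perp = \bigcap W_i^{\perp\perp} = \bigcap W_i$ and then takes perps of both sides using $W^{\perp\perp}=W$. The whole argument is essentially a two-line formal computation once the correct dictionary is set up.
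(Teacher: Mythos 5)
Your argument is correct and follows essentially the same route as the paper: reduce to $I_\Lambda=\bigcap_i I_{\Lambda_i}$, use the identity $(I\cap J)^\perp=I^\perp+J^\perp$ (which you justify degree-by-degree via the perfect pairing on $S_d\times T_d$), and then cite the preceding lemma for each component. The only difference is that you spell out the standard perp-interchange linear algebra that the paper simply quotes.
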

\begin{proof}
The proof follows readily from the previous lemma once we recall
that $(I\cap J)^\perp=I^\perp + J^\perp$.
\end{proof}

Now we can make clear the connection with question $(\star)$.
Given linear forms $l_{i,j}\in T_1,i=1,\ldots,r,j=0,\ldots,n_i$,
consider the linear spaces $\Lambda_i\subset\PP^n$ having defining
ideal generated by $(l_{i,0},\ldots,l_{i,n_i})_1^\perp$. Then any
degree $d$ element in $T$ can be written as a form in the
$l_{i,j}$'s if and only if $I_d=0$ where $I$ is the ideal of the
configuration of linear spaces
$\Lambda=\Lambda_1+\ldots+\Lambda_s$. Hence, Theorem§
\ref{summarythm} produces a complete answer to question $(\star)$
for $d=2$ and for a generic choice of linear forms $l_{i,j}$'s.


\begin{thebibliography}{MYDF08}

\bibitem[BPS05]{BjornerPeevaSidman}
A.~Bj{\"o}rner, I.~Peeva, and J.~Sidman.
\newblock Subspace arrangements defined by products of linear forms.
\newblock {\em J. London Math. Soc. (2)}, 71(2):273--288, 2005.

\bibitem[Car05]{Ca05Siena}
E.~Carlini.
\newblock Codimension one decompositions and {C}how varieties.
\newblock In {\em Projective varieties with unexpected properties}, pages
  67--79. Walter de Gruyter GmbH \& Co. KG, Berlin, 2005.

\bibitem[Car06]{Ca04JA}
E.~Carlini.
\newblock Binary decompositions and varieties of sums of binaries.
\newblock {\em J. Pure Appl. Algebra}, 204(2):380--388, 2006.

\bibitem[CC07]{CaCat07}
E.~Carlini and M.~V. Catalisano.
\newblock Existence results for rational normal curves.
\newblock {\em J. Lond. Math. Soc. (2)}, 76(1):73--86, 2007.

\bibitem[CC09]{CaCat09}
E.~Carlini and M.~V. Catalisano.
\newblock On rational normal curves in projective space.
\newblock {\em J. Lond. Math. Soc. (2)}, 80(1):1--17, 2009.

\bibitem[CCG09]{CarCatGer}
E.~Carlini, M.~V. Catalisano, and A.V. Geramita.
\newblock The {H}ilbert function of a generic configuration of lines and one
  plane.
\newblock {\em in preparation}, 2009.

\bibitem[CGG05]{MR2202248}
M.~V. Catalisano, A.~V. Geramita, and A.~Gimigliano.
\newblock Higher secant varieties of {S}egre-{V}eronese varieties.
\newblock In {\em Projective varieties with unexpected properties}, pages
  81--107. Walter de Gruyter GmbH \& Co. KG, Berlin, 2005.

\bibitem[Der07]{Derksen}
H.~Derksen.
\newblock Hilbert series of subspace arrangements.
\newblock {\em J. Pure Appl. Algebra}, 209(1):91--98, 2007.

\bibitem[DS02]{DerksenSidman}
H.~Derksen and J.~Sidman.
\newblock A sharp bound for the {C}astelnuovo-{M}umford regularity of subspace
  arrangements.
\newblock {\em Adv. Math.}, 172(2):151--157, 2002.

\bibitem[Ger96]{Ge}
A.~V. Geramita.
\newblock Inverse systems of fat points: {W}aring's problem, secant varieties
  of {V}eronese varieties and parameter spaces for {G}orenstein ideals.
\newblock In {\em The Curves Seminar at Queen's, Vol.\ X (Kingston, ON, 1995)},
  volume 102 of {\em Queen's Papers in Pure and Appl. Math.}, pages 2--114.
  Queen's Univ., Kingston, ON, 1996.

\bibitem[Har92]{Harris}
J.~Harris.
\newblock {\em Algebraic geometry, A first course}.
\newblock Graduate Texts in Math. Springer-Verlag, New York, 1992.

\bibitem[HH82]{HartshorneHirschowitz}
R.~Hartshorne and A.~Hirschowitz.
\newblock Droites en position g\'en\'erale dans l'espace projectif.
\newblock In {\em Algebraic geometry ({L}a {R}\'abida, 1981)}, volume 961 of
  {\em Lecture Notes in Math.}, pages 169--188. Springer, Berlin, 1982.

\bibitem[IK99]{IaKa}
A.~Iarrobino and V.~Kanev.
\newblock {\em Power sums, {G}orenstein algebras, and determinantal loci},
  volume 1721 of {\em Lecture Notes in Mathematics}.
\newblock Springer-Verlag, Berlin, 1999.

\bibitem[MYDF08]{DerksenApplication}
Y.~Ma, A.~Y. Yang, H.~Derksen, and R.~Fossum.
\newblock Estimation of subspace arrangements with applications in modeling and
  segmenting mixed data.
\newblock {\em SIAM Rev.}, 50(3):413--458, 2008.

\bibitem[Sid04]{Sidman04}
J.~Sidman.
\newblock Defining equations of subspace arrangements embedded in reflection
  arrangements.
\newblock {\em Int. Math. Res. Not.}, 15:713--727, 2004.

\bibitem[Sid07]{Sidman07}
J.~Sidman.
\newblock Resolutions and subspace arrangements.
\newblock In {\em Syzygies and {H}ilbert functions}, volume 254 of {\em Lect.
  Notes Pure Appl. Math.}, pages 249--265. Chapman \& Hall/CRC, Boca Raton, FL,
  2007.

\end{thebibliography}
\end{document}